\begin{document}

 \renewcommand*{\backref}[1]{}
\renewcommand*{\backrefalt}[4]{%
    \ifcase #1 (Not cited.)%
    \or        (p.\,#2)%
    \else      (pp.\,#2)%
    \fi}

\newtheorem{theorem}{Theorem}
\newtheorem{lemma}[theorem]{Lemma}
\newtheorem{claim}[theorem]{Claim}
\newtheorem{cor}[theorem]{Corollary}
\newtheorem{prop}[theorem]{Proposition}
\newtheorem{definition}{Definition}
\newtheorem{question}[theorem]{Question}
\newcommand{\hh}{{{\mathrm h}}}

\numberwithin{equation}{section}
\numberwithin{theorem}{section}

\def\sssum{\mathop{\sum\!\sum\!\sum}}
\def\ssum{\mathop{\sum\ldots \sum}}

\def \balpha{\boldsymbol\alpha}
\def \bbeta{\boldsymbol\beta}
\def \bgamma{{\boldsymbol\gamma}}
\def \bomega{\boldsymbol\omega}

\newcommand{\Res}{\mathrm{Res}\,}

\def\sssum{\mathop{\sum\!\sum\!\sum}}
\def\ssum{\mathop{\sum\ldots \sum}}
\def\dsum{\mathop{\sum\  \sum}}
\def\iint{\mathop{\int\ldots \int}}

\def\squareforqed{\hbox{\rlap{$\sqcap$}$\sqcup$}}
\def\qed{\ifmmode\squareforqed\else{\unskip\nobreak\hfil
\penalty50\hskip1em\null\nobreak\hfil\squareforqed
\parfillskip=0pt\finalhyphendemerits=0\endgraf}\fi}

\newfont{\teneufm}{eufm10}
\newfont{\seveneufm}{eufm7}
\newfont{\fiveeufm}{eufm5}
%
%
\newfam\eufmfam
     \textfont\eufmfam=\teneufm
\scriptfont\eufmfam=\seveneufm
     \scriptscriptfont\eufmfam=\fiveeufm
%
%
\def\frak#1{{\fam\eufmfam\relax#1}}

\def\fK{\mathfrak K}
\def\fT{\mathfrak{T}}

\def\rE{{\mathrm{E}}}
\def\rM{{\mathrm{M}}}

\def\fA{{\mathfrak A}}
\def\fB{{\mathfrak B}}
\def\fC{{\mathfrak C}}
\def\fD{{\mathfrak D}}
\def\fS{{\mathfrak S}}

\newcommand{\sX}{\ensuremath{\mathscr{X}}}

\def\eqref#1{(\ref{#1})}

\def\vec#1{\mathbf{#1}}
\def\dist{\mathrm{dist}}
\def\vol#1{\mathrm{vol}\,{#1}}

\def\squareforqed{\hbox{\rlap{$\sqcap$}$\sqcup$}}
\def\qed{\ifmmode\squareforqed\else{\unskip\nobreak\hfil
\penalty50\hskip1em\null\nobreak\hfil\squareforqed
\parfillskip=0pt\finalhyphendemerits=0\endgraf}\fi}

\def\sA{\mathscr A}
\def\sB{\mathscr B}
\def\sC{\mathscr C}
\def\sD{\Delta}
\def\sE{\mathscr E}
\def\sF{\mathscr F}
\def\sG{\mathscr G}
\def\sH{\mathscr H}
\def\sI{\mathscr I}
\def\sJ{\mathscr J}
\def\sK{\mathscr K}
\def\sL{\mathscr L}
\def\sM{\mathscr M}
\def\sN{\mathscr N}
\def\sO{\mathscr O}
\def\sP{\mathscr P}
\def\sQ{\mathscr Q}
\def\sR{\mathscr R}
\def\sS{\mathscr S}
\def\sU{\mathscr U}
\def\sT{\mathscr T}
\def\sV{\mathscr V}
\def\sW{\mathscr W}
\def\sX{\mathscr X}
\def\sY{\mathscr Y}
\def\sZ{\mathscr Z}

\def\cA{{\mathcal A}}
\def\cB{{\mathcal B}}
\def\cC{{\mathcal C}}
\def\cD{{\mathcal D}}
\def\cE{{\mathcal E}}
\def\cF{{\mathcal F}}
\def\cG{{\mathcal G}}
\def\cH{{\mathcal H}}
\def\cI{{\mathcal I}}
\def\cJ{{\mathcal J}}
\def\cK{{\mathcal K}}
\def\cL{{\mathcal L}}
\def\cM{{\mathcal M}}
\def\cN{{\mathcal N}}
\def\cO{{\mathcal O}}
\def\cP{{\mathcal P}}
\def\cQ{{\mathcal Q}}
\def\cR{{\mathcal R}}
\def\cS{{\mathcal S}}
\def\cT{{\mathcal T}}
\def\cU{{\mathcal U}}
\def\cV{{\mathcal V}}
\def\cW{{\mathcal W}}
\def\cX{{\mathcal X}}
\def\cY{{\mathcal Y}}
\def\cZ{{\mathcal Z}}
\newcommand{\rmod}[1]{\: \mbox{mod} \: #1}

\def \wtcA {\widetilde \cA}
\def \wtcB {\widetilde \cB}
\def \wtcP {\widetilde \cP}

\def \wtA {\widetilde A}
\def \wtB {\widetilde B}
\def \wtP {\widetilde P}

\def\vr{\mathbf r}

\def\e{{\mathbf{\,e}}}
\def\ep{{\mathbf{\,e}}_p}
\def\em{{\mathbf{\,e}}_m}
\def\en{{\mathbf{\,e}}_n}

\def\Tr{{\mathrm{Tr}}}
\def\Nm{{\mathrm{Nm}}}

 \def\SS{{\mathbf{S}}}

\def\lcm{{\mathrm{lcm}}}

\def\({\left(}
\def\){\right)}
\def\fl#1{\left\lfloor#1\right\rfloor}
\def\rf#1{\left\lceil#1\right\rceil}

\def\mand{\qquad \mbox{and} \qquad}

\newcommand{\commA}[1]{\marginpar{%
\begin{color}{red}
\vskip-\baselineskip 
\raggedright\footnotesize
\itshape\hrule \smallskip A: #1\par\smallskip\hrule\end{color}}}

\newcommand{\commI}[1]{\marginpar{%
\begin{color}{blue}
\vskip-\baselineskip 
\raggedright\footnotesize
\itshape\hrule \smallskip I: #1\par\smallskip\hrule\end{color}}}

\newcommand{\commO}[1]{\marginpar{%
\begin{color}{magenta}
\vskip-\baselineskip 
\raggedright\footnotesize
\itshape\hrule \smallskip O: #1\par\smallskip\hrule\end{color}}}




\hyphenation{re-pub-lished}

\parskip 4pt plus 2pt minus 2pt



\def\bfdefault{b}
\overfullrule=5pt

\def \F{{\mathbb F}}
\def \K{{\mathbb K}}
\def \Z{{\mathbb Z}}
\def \Q{{\mathbb Q}}
\def \R{{\mathbb R}}
\def \C{{\mathbb C}}
\def\Fp{\F_p}
\def \fp{\Fp^*}

\def\ABC{ab+ac+bc}
\def\sABC{\sC(\cA, \cB, \cC)}
\def\sAAA{\sC(\cA)}
\def\sIk{\sI_k(\cA, \cB, \cC)}
\def\SABC{S_\psi(\cA, \cB, \cC)}
\def\TABC{T_\chi(\cA, \cB, \cC)}
\def\fABC{\fS_{\chi,\psi}(\cA, \cB, \cC)}
\def\SAAA{S_\psi(\cA, \cB, \cC)}
\def\TAAA{T_\chi(\cA)}

\title[Balog--Wooley Decomposition and Convolutions]{Analogues of the Balog--Wooley Decomposition for Subsets of Finite Fields and 
 Character Sums with Convolutions}

\author[O. Roche-Newton]{Oliver  Roche-Newton }
\address{Institute of Financial Mathematics and Applied Number Theory, Johannes Kepler University Linz,
Altenberger Stra\ss e 69, A-4040 Linz, Austria}
\email{o.rochenewton@gmail.com }

\author[I. E.~Shparlinski]{Igor E. Shparlinski}

\address{School of Mathematics and Statistics, University of New South Wales\\
2052 NSW, Australia.}

\email{igor.shparlinski@unsw.edu.au}

\author[A. Winterhof]{Arne Winterhof}
\address{Johann Radon Institute for
Computational and Applied Mathematics,
Austrian Academy of Sciences, Altenberger Stra\ss e 69, A-4040 Linz, Austria}
\email{arne.winterhof@oeaw.ac.at}

\begin{abstract} 
 Balog and Wooley have recently proved that any subset $\cA$ of either real numbers or of a prime finite field can be decomposed into
two parts $\cU$ and $\cV$, one of small additive energy and the other of small multiplicative energy.  
In the case of arbitrary finite fields, we obtain an analogue that under some natural restrictions for a rational function $f$ both the additive energies of $\cU$ and $f(\cV)$ are small. Our method is based on bounds of character sums which 
leads to the restriction $\# \cA > q^{1/2}$ where $q$ is the field size. The bound is optimal, up to logarithmic factors, when $\# \cA  \geq q^{9/13}$.
Using $f(X)=X^{-1}$ we 
apply this result to estimate some triple additive and multiplicative  character sums involving three sets
with convolutions $\ABC$ with variables $a,b,c$ running through 
three arbitrary subsets of a finite field. 
\end{abstract}

\keywords{finite fields, convolution, inversions, sumsets, energy, character sums}
\subjclass[2010]{11B30, 11T30}

\maketitle

\section{Introduction} 

\subsection{Background}

Let $\F_q$ denote the finite field of $q$ elements of characteristic $p$. 

Given two sets $\cU,
\cV \subseteq  \F_q$,  as usual, we define their sum  and product sets
as
$$
\cU + \cV = \{u+v~:~ u\in\cU,\ v \in \cV\}
\quad \text{and}\quad
\cU \cdot \cV = \{uv~:~ u\in\cU,\ v \in \cV\}.
$$

The sum-product problem is concerned with proving that, for a given set $\cU$ in a field $\F$, at least one of $\cU+\cU$ and $\cU \cdot \cU$ has cardinality significantly larger than the original set $\cU$. This problem has been widely studied in the finite field setting in recent years, originating from the work of Bourgain, Katz and Tao~\cite{BKT} and subsequently Bourgain, Glibichuk and Konyagin~\cite{BGK} in proving that for some absolute constants $c, \varepsilon >0$, the bound
\begin{equation}
 \max \{\#(\cU+\cU), \#(\cU \cdot \cU )\} \geq c ( \# \cU)^{1+\varepsilon}
 \label{bkt}
 \end{equation}
holds for all $\cU \subseteq \mathbb \F_p$, subject to certain necessary restrictions on $\# \cU$. See~\cite{RNRS} for the best estimates for this problem and for further background on sum-product estimates.

A basic tool in sum-product estimates is the notion of different kinds of energy. The additive energy $\rE(\cU)$ of the set $\cU \subseteq \F_q$ 
is defined as
$$
\rE(\cU) =\# \{(u_1,u_2,u_3,u_4) \in \cU^4~:~u_1 + u_2 = u_3 + u_4\}.
$$
Note that the multiplicative energy, denoted by $\rE^\times(\cU)$, is defined similarly with respect to the equation 
$u_1   u_2 = u_3   u_4$.  It follows from a straightforward application of the Cauchy-Schwarz inequality that
$$\rE(\cU) \geq \frac{ (\#\cU)^4}{\# (\cU+ \cU)},$$
and so good upper bounds on the additive energy of $\cU$ translate into good lower bounds for the size of the sum set of $\cU$. Similarly, good upper bounds on the multiplicative energy of $\cU$ translate into good lower bounds for the size of the product set of $\cU$.

In the spirit of the sum-product problem, one may naively expect that an analogue of the inequality~\eqref{bkt} holds, and that at least one of $\rE(\cU)$ and $\rE^\times(\cU)$ must be small. This is not true, as can be seen by taking $\cU$ to be the union of an arithmetic progression and a geometric progression of the same size. 
However, Balog and Wooley~\cite{BalWool} have shown something of this nature when they proved (in both the Euclidean and finite field setting) that the set $\cU$ can be written as a union of $\cV$ and $\cW$ such that $\rE(\cV)$ and $\rE^\times(\cW)$ are both small. These results were improved quantitatively in~\cite{KS} and~\cite{RSS}.

\subsection{An analogue of the Balog-Wooley Theorem}

Our main result is a generalisation of the Balog-Wooley
 decomposition~\cite[Theorem~1.3]{BalWool}. 
 
 For real   $Z > 1$ we define the quantity  
\begin{equation}
\label{eq:MZ} 
\rM(Z)=  \min \left \{   \frac{q^{1/2} }{Z^{1/2}(\log Z)^{11/4}}  ,  \,   \frac{Z^{4/5}}{q^{2/5}(\log Z)^{31/10}}         \right \} .
\end{equation}

As usual, we use the expressions $F \ll G$, $G \gg F $ and $F=O(G)$ to
mean $|F|\leq cG$ for some constant $c>0$. If the constant $c$ depends on a parameter $k$, we write 
$F = O_k(G)$ or $F \ll_k G$.  We also write  $F(x)=o(G(x))$ as an equivalent to $\lim\limits_{x\to \infty} F(x)/G(x)=0$.
Throughout the   paper,  we always use:
$$
\# \cA = A, \qquad \# \cB = B, \qquad \# \cC = C. 
$$
We denote by $p$ the characteristic of $\F_q$. 
\begin{theorem} 
\label{thm:decomp}
 For any set $\cA \subseteq  \F_q$ and any rational function $f\in \F_q(X)$ of degree $k$ which is not of the form $g(X)^p-g(X)+\lambda X+\mu$, there exist disjoint sets $\cS, \cT \subseteq \cA$ such that 
$\cA=\cS \cup \cT$ and with
$$\max\{\rE(\cS),\rE(f(\cT)) \}\ll_k \frac{A^3}{\rM(A)}.
$$
\end{theorem}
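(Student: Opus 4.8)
The plan is to follow the dyadic/pigeonhole strategy of Balog--Wooley, but with the multiplicative energy replaced by the additive energy of $f(\cT)$, using character sum estimates as the key input. First I would introduce, for a threshold parameter $M$ to be optimized, the set $\cP$ of ``additively popular'' differences, namely those $x \in \F_q$ for which the number of representations $r_{\cA-\cA}(x) = \#\{(a,a')\in\cA^2 : a-a' = x\}$ exceeds $A^2/M$; a standard count gives $\#\cP \le M \cdot \rM(A)$-type control once $M$ is chosen via $\rM$. Dually, I would want a set of ``multiplicatively popular'' ratios for the function $f$: thinking of $f(\cT)$, one studies $\#\{(t,t')\in\cT^2 : f(t) - f(t') = y\}$ and isolates those $y$ with many representations. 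The construction of $\cS$ and $\cT$ then proceeds greedily: as long as some element $a\in\cA$ lies in ``too many'' popular additive differences with the current pool, remove it into $\cT$; the remaining set becomes $\cS$, which by design has small $\rE(\cS)$. One then has to show the discarded set $\cT$ is small enough that $\rE(f(\cT))$ is automatically controlled by the trivial bound $\rE(f(\cT)) \le (\#\cT)^3 + (\#\cT)^2 \cdot (\text{popularity threshold})$, and that $\#\cT$ is small because each removed element ``used up'' a definite chunk of the bounded popular-difference set.

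The engine that makes the numbers in $\rM(Z)$ come out is a bound for the relevant character sums. Concretely, I expect one needs an estimate of the shape
\begin{equation*}
\sum_{a \in \cA}\ \Big| \sum_{t \in \cT} \psi\big(a(f(t)-f(t'))\big) \Big| \ll_k q^{1/2} (\#\cA)^{1/2} (\#\cT),
\end{equation*}
or rather its fourth-moment/energy cousin, coming from Weil-type bounds for sums $\sum_{t} \psi(\lambda f(t))$ valid precisely because $f$ is not of the exceptional additive form $g^p - g + \lambda X + \mu$ (that hypothesis is exactly what guarantees $\lambda f(X) + \mu X$ is never an ``additive polynomial plus linear'' perturbation that would make the character sum degenerate). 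Squaring out $\rE(f(\cT))$ and expanding each additive-equation count via orthogonality of additive characters $\psi$ over $\F_q$ reduces the whole problem to bilinear forms $\sum_{\lambda}\sum_{a\in\cA}\sum_{t\in\cT}\psi(\lambda a f(t))$; the character sum bound kills the nonprincipal $\lambda$ contribution, while the principal term reproduces the ``expected'' main term. Balancing the error term $q^{1/2}$-type contribution against the main term, and then against the greedy bookkeeping, produces the two competing quantities $q^{1/2}/Z^{1/2}$ and $Z^{4/5}/q^{2/5}$ inside $\rM$, with the logarithmic powers absorbing the dyadic decomposition over $O(\log Z)$ scales of popularity.

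In more detail, the key steps in order are: (i) fix $M \asymp \rM(A)$ and split $\F_q$ into dyadic popularity classes $\cP_j = \{x : A^2/2^{j+1} < r_{\cA-\cA}(x) \le A^2/2^j\}$ for $0 \le j \ll \log A$, with $\#\cP_j \le 2^{j} \rE(\cA)/A^4 \cdot (\text{stuff})$; (ii) run the greedy removal to build $\cT$, showing $\#\cT \ll A/M^{?}$ by a counting argument over the popular classes; (iii) bound $\rE(\cS)$ directly from the fact that $\cS$ avoids popular differences, giving $\rE(\cS) \ll A^4/M \le A^3/\rM(A)$ after using $A \le q$; (iv) bound $\rE(f(\cT))$ by opening it with additive characters, inserting the Weil bound on $\sum_t \psi(\lambda f(t))$ (legitimate by the non-exceptional hypothesis on $f$, and this is where the degree $k$ enters the implied constant), and combining with the smallness of $\#\cT$. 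The main obstacle, and the place requiring the most care, is step (iv) coupled with the optimization: one must set up the energy expansion so that the available character-sum savings $q^{1/2}$ (per inner sum) are enough to beat the combinatorially large but hard-to-control $\rE(f(\cT))$, and the interplay forces the threshold $M$ — hence the exact exponents $1/2$, $4/5$, $2/5$ and the logarithmic powers $11/4$, $31/10$ in $\rM$. Getting those to match requires choosing the dyadic cutoff between ``the character-sum regime'' (effective when $A$ is large, $A \gtrsim q^{9/13}$) and ``the trivial regime'' at just the right point, which is precisely the content of the $\min$ in~\eqref{eq:MZ}.
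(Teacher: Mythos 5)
Your proposal has the right ingredients in the background (dyadic popularity classes, the Weil bound, the non-exceptionality hypothesis on $f$, a balancing step producing $\rM$), but the architecture is inverted in a way that cannot be repaired. You propose to make the discarded set $\cT$ \emph{small} and then control $\rE(f(\cT))$ essentially trivially via $(\#\cT)^3$. This is impossible in general: if $\cA$ is an arithmetic progression, then any $\cS$ with $\rE(\cS)\ll A^3/\rM(A)$ must satisfy $S^4/(2A)\le\rE(\cS)$, forcing $S\ll A/\rM(A)^{1/4}$, so $\cT$ necessarily contains almost all of $\cA$ and the trivial bound $\rE(f(\cT))\le T^3\approx A^3$ falls short of the target $A^3/\rM(A)$ by the full factor $\rM(A)$. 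The content of the theorem is precisely that $\rE(f(\cT))$ is small \emph{even when $\cT$ is large}, because $f$ destroys additive structure. The paper's proof therefore runs the extraction the other way: it iteratively peels off subsets $\cQ_i$ of the current remainder $\cV_i$ (as long as $\rE(\cV_i)$ is still large) for which one can \emph{prove} $\rE(f(\cQ_i))$ is small, stops when the remainder $\cS=\cV_m$ has small additive energy, sets $\cT=\bigsqcup_i\cQ_i$, and controls $\rE(f(\cT))$ by the subadditivity $\rE^{1/4}(\bigcup_i f(\cQ_i))\le\sum_i\rE^{1/4}(f(\cQ_i))$ (Lemma~\ref{sumdecomp}). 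Your single-pass greedy has no analogue of this iteration or of the recombination step.

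The second gap is in your step (iv): the Weil bound applies to the complete sum $\sum_{x\in\F_q}\psi(\lambda f(x))$, not to the incomplete sum $\sum_{t\in\cT}\psi(\lambda f(t))$ over an arbitrary set, and completion only recovers the trivial bound $\rE(f(\cT))\ll_k T^3$ (via $\sum_\lambda|\sum_t\psi(\lambda f(t))|^2\le qkT$). To get a genuine saving the paper exploits the \emph{structure} of the extracted set: Lemma~\ref{lemma:main} produces $\cU$ together with a popular-sum set and a multiplicity $u$ such that $r_{\cS,-\cA}(x)\ge u$ for all $x\in\cU$; each occurrence of $f(y)$ with $y\in\cU$ is then replaced by $f(v-w)$ in at least $u$ ways with $(v,w)$ ranging over a product set, so that $r_\cU(f;x)$ is controlled by the number of solutions of $f(w+x)=y+z$ over a fourfold Cartesian product (Lemmas~\ref{lem:prodsum} and~\ref{lem:shkredov}), where orthogonality in every variable reduces matters to the complete mixed sum of Lemma~\ref{lem:AddMult} and Weil legitimately applies. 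Without this conversion from incomplete sums over a structureless set to complete sums over product sets, the character-sum input you invoke cannot beat the trivial bound, and the exponents in $\rM$ cannot be produced.
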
  

One may check that Theorem~\ref{thm:decomp}  
 is non-trivial when $A \ge q^{1/2+\varepsilon}$, 
for any fixed $\varepsilon> 0$. For comparison, note that the decomposition results in~\cite{BalWool} and~\cite{RSS} are applicable below this range. This is because the main tool in~\cite{BalWool} and~\cite{RSS} is a strong new point-plane incidence bound of Rudnev~\cite{Rud}, whereas our main tool is the Weil bound. It may be within reach to obtain a version of Theorem~\ref{thm:decomp} which is non-trivial for smaller sets by finding a way to apply new results in incidence theory, but we have been unable to do this in the present paper.

Theorem~\ref{thm:decomp} covers some particularly natural choices of functions such as $f(X)=X^{-1}$ and $f(X)=X^2$ for odd $q$ which have been seen in sum-product literature before. For example, for these two functions, it is known 
(see~\cite[Propositions~12 and~14]{AYMRS}) that
\begin{equation}
\# \{u+v^2 : u,v \in \cU \} \gg (\#\cU)^{11/10} \quad \mbox{if } \#\cU<p^{3/5}
\label{square}
\end{equation}
and 
\begin{equation}
\#\{u+v^{-1}: u,v\in \cU\}\gg (\#\cU)^{31/30}\quad \mbox{if }\#\cU<p^{5/8}
\label{inverse}
\end{equation}
(we remark that the size of $\cU$ is bounded in terms of the characteristic $p$ rather than of $q$).
The moral here is that a non-linear function $f$  destroys any additive structure that originally exists in a set. 
A version of Theorem~\ref{thm:decomp} with $\cA \subseteq \C$ and $f \in  \C(X)$ defined by $f(X)=X^{-1}$ has been given in~\cite[Theorem~9]{RSS}.

Note that the bounds~\eqref{square} and~\eqref{inverse} hold for smaller sets. This gives another hint that it may be possible to obtain a version of Theorem~\ref{thm:decomp} that gives a non-trivial bound below the square root threshold for certain special functions $f$.

The proof of Theorem~\ref{thm:decomp} is partly based on the work of Rudnev, Shkredov and Stevens~\cite{RSS}. 
We believe that it is of independent 
interest and may have several other applications. 

\subsection{The tightness of  the bound and conditions of Theorem~\ref{thm:decomp}}
Theorem~\ref{thm:decomp} becomes increasingly accurate as $A$ gets larger, and in fact is optimal up to logarithmic factors when $A \geq q^{9/13}$, as the following example over the prime field $\F_p$ illustrates. We consider the case when $f(X)=X^{-1}$, although a natural adaptation of the construction works for any rational function $f$ of degree $k$, with the construction getting slightly worse as $k$ increases. This is an adaptation of a construction from finite field sum-product theory, see Garaev~\cite[page~2736]{Gar2} for a presentation.

Let $\cJ$ be the interval $\{1,2,\dots, \lambda \}$, for an integer parameter $\lambda  < p$ to be chosen later, and then cover $\F_p$ by $\lceil p/ \lambda \rceil \ll p / \lambda$ disjoint intervals of size at most $\lambda$. By the pigeonhole principle, one of these intervals $\cJ_0$ has the property that $\#(\cJ_0 \cap f(\cJ)) \gg \lambda^2/p$. Define $\cA=\cJ_0 \cap f(\cJ)$. Then it follows that $\#(\cA+\cA) \ll \lambda \ll (Ap)^{1/2}$ and $\#(f(\cA)+f(\cA)) \ll \lambda \ll (Ap)^{1/2}$. For an arbitrary
function $f$ one has to work 
with the preimage  $f^{-1}(\cJ)$ of $\cJ$ 
(note that  $f(X)=X^{-1}$ we have  $f^{-1}=f$ and thus $f(\cJ)= f^{-1}(\cJ)$). 

Now, we can apply Theorem~\ref{thm:decomp} to this set, obtaining a decomposition $\cA=\cS \cup \cT$. One of these sets has cardinality at least $A/2$, and without loss of generality we assume that $S = \#\cS \geq A/2$. Then, by the Cauchy-Schwarz inequality,
$$A^4 \ll S^4 \leq \#(\cS+\cS) \rE(\cS) \leq \#(\cA+\cA)  \rE(\cS) \ll (Ap)^{1/2} \rE(\cS) ,$$
and so 
$$\rE(\cS) \gg \frac{A^3}{(p/A)^{1/2}}.$$
When $A \gg  p^{9/13}$ (that is, for any choice of  $\lambda \gg p^{11/13}$)  
we have $\rM(A) \gg  (p/A)^{1/2}(\log A)^{-11/4}$, hence 
this lower bound matches the upper bound given by Theorem~\ref{thm:decomp}, up to logarithmic factors.

The following example shows that the condition on $f$ in
Theorem~\ref{thm:decomp} is needed:
let $\cA$ be any subset of $\F_q$ of additive energy $\rE(\cA)\gg A^3$ such as
an arithmetic progression or an additive subgroup. Then by the forthcoming
Lemma~\ref{sumdecomp} we have
$\max\{\rE(\cS),\rE(\cT)\}\gg A^3$ for any decomposition $\cA=\cS\cup \cT$ of
$\cA$. Now if $f(X)=\sum a_i X^{p^i}\in \F_q[X]$ is a linearized permutation
polynomial, and thus of the form
$g(X)^p-g(X)+\lambda X+\mu$, we have $f(a)+f(b)=f(a+b)$ and so
$$\max\{\rE(\cS),\rE(f(\cT))\}=\max\{\rE(\cS),\rE(\cT)\}\gg A^3.$$

\subsection{Applications to character sums}
We use $\Psi$ and $\cX$ 
to denote, respectively,  the sets of additive and multiplicative characters in $\F_q$, see~\cite{IwKow, LN}
for some background on characters. Furthermore, we use  $\Psi^*$ and $\cX^*$ to denote the
sets of nontrivial characters. 

Given three sets $\cA, \cB, \cC \subseteq \F_q$ we define
the following sums of  additive and multiplicative characters
$$
\SABC = \sum_{a\in \cA} \sum_{b \in \cB} \sum_{c\in \cC} \psi(\ABC), \qquad \psi \in \Psi,
$$
and 
$$
\TABC = \sum_{a\in \cA} \sum_{b \in \cB} \sum_{c\in \cC} \chi(\ABC), \qquad \chi \in \cX.
$$
Our interest to these sums is motivated by recent progress in bounds of additive and multiplicative 
character sums involving three sets, see~\cite{BouGar, PetShp} and~\cite{BalWool, Han, ShkShp}, 
respectively. 

For $\psi \in \Psi^*$ and $\chi \in \cX^*$ we have  the classical bounds of double sums
$$\max\left\{\left|\sum_{b\in \cB,\, c\in \cC}\psi(bc)\right|, \left|\sum_{b\in \cB,\, c\in \cC}\chi(bc+1)\right|\right\}
= O (\sqrt{BCq}),
$$
see, for example,~\cite[Corollaries~1 and~5]{gysa08},
where the constant in the symbol ``$O$'' is absolute and can be easily evaluated, 
(in fact it can be taken as 1 for additive character sums and also for multiplicative character sums
if  $\cB \subseteq \F_q^*$ or  $\cC \subseteq \F_q^*$). 
These 
immediately yield the bounds 
\begin{equation}
\label{eq:bilin1}
\SABC = O\(A\sqrt{BCq}\) 
\end{equation}
and, provided $ 0\not\in \cA$, 
\begin{equation}
\label{eq:bilin2}
\TABC= O\(A\sqrt{BCq}\). 
\end{equation} 


Note that~\eqref{eq:bilin1} and~\eqref{eq:bilin2} are best possible in general.
For example, take $q=r^2$, for a prime power $r$, then it is easy to check that 
with $\cA=\F_r$, resp. $\cA=\F_r^*$, $\cB=\cC=\F_r$ and any  $\psi \in \Psi^*$ and  $\chi \in \cX^*$  which are trivial on $\F_r$, 
these bounds are attained. 
%

For additive character sums we also provide an example when $q=p$ is prime. Take $\cA=\cB=\cC=\{0,1,2,\ldots, \lfloor 0.1p^{1/2}\rfloor\}$. Then we have
$0\le ab+ac+bc \le 0.03p$ for any $a,b,c\in \cA\times \cB\times \cC$ and thus for the additive canonical character $\psi(x)=\cos(2\pi x/p)+i\sin(2\pi x/p)$ of $\F_p$ we get
$|\SABC|\ge ABC \cos(0.06 \pi)\ge 0.98 ABC$ which is of the same order of magnitude $p^{3/2}$ as $A\sqrt{BCp}$. 

However, if, say, $\cC$ is a sufficiently large structured set, we can get improvements. For example, if $\cC$ is an additive subgroup of $\F_q$,
we get
$$\SABC\le A\sum_{b\in \F_q}\left|\sum_{c\in \cC}\psi(bc)\right|\le A q$$
by~\cite[Lemma~3.4]{wi01}, which improves~\eqref{eq:bilin1}   if $q=o(BC)$, as well as,
$$
\TABC\le \min\{A,B\}C+ABq^{1/2}
$$
by~\cite{wi01} (Remark~(iii) below Theorem~3.7), which improves~\eqref{eq:bilin2} if $B = o(C)$.
Similar results can be obtained for other structured sets $\cC$ such as arithmetic or 
geometric progressions.
If the sets $\cA$ and $\cB$ are also structured, one can take further advantage of this.  

In passing, we note that, sum-product and incidence theory can be used to show that
$$
\sABC = \{\ABC~:~a\in \cA, \, b \in \cB, \, c\in \cC\}
$$
 is always large. A recent result 
of  Pham, Vinh and de Zeeuw~\cite{PVdZ} gives a lower bound if $A=B=C$,  
$$\# \sC(\cA, \cB, \cC) \gg \min \{ p, A^{3/2} \}.$$
There is  little doubt that it can be extended to cover the case when the
variables come from sets $\cA, \cB, \cC$ of different sizes.

Although we have not been able to improve~\eqref{eq:bilin1} and~\eqref{eq:bilin2}, 
we obtain several related results about the structure of the set $\sABC$.
For example, we use Theorem~\ref{thm:decomp} with $f(X)=X^{-1}$, to show that any sufficiently large set $\cB \subseteq \F_q$ contains a large subset $\cW$  such that for any set $\cA \subseteq \F_q$ 
at least one of $S_\psi(\cA, \cW, \cW)$ or $T_\chi(\cA, \cW, \cW)$ can be estimated nontrivially.

\begin{theorem}
\label{thm:S or T}
 For any sets $\cA,\cB \subseteq  \F_q^*$  there exists a subset $\cW \subseteq \cB$ 
 of cardinality $W \ge B/2$ such that for any 
characters $ \(\chi,   \psi\) \in \Psi^* \times \cX^*$ we
 have
$$
\min \left\{ |S_\psi(\cA, \cW, \cW)|, |T_\chi(\cA, \cW, \cW)|\right\} \ll  A^{1/2}B^{3/2}q^{1/2}\rM(B)^{-1/2}, 
$$
where $\rM(Z)$ is defined by~\eqref{eq:MZ}.
\end{theorem}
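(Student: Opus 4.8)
The plan is to invoke Theorem~\ref{thm:decomp} with the set $\cB$ in the role of $\cA$ and with the rational function $f(X)=X^{-1}$, which has degree $1$ and is not of the excluded form $g(X)^p-g(X)+\lambda X+\mu$ (it is not even a polynomial). This produces disjoint sets $\cS,\cT$ with $\cB=\cS\cup\cT$ and
$$
\max\{\rE(\cS),\,\rE(\cT^{-1})\}\ll \frac{B^3}{\rM(B)},
$$
where $\cT^{-1}=\{t^{-1}:t\in\cT\}$. At least one of $\cS,\cT$ has at least $B/2$ elements; let $\cW$ be such a set, so $W\ge B/2$. I will show that if $\cW=\cS$ then $S_\psi(\cA,\cW,\cW)$ obeys the claimed bound, while if $\cW=\cT$ then $T_\chi(\cA,\cW,\cW)$ does; in either case the minimum of the two is bounded as required. (The point of the $\min$ is that from the partition we only control $\rE(\cS)$ and $\rE(\cT^{-1})$, not $\rE(\cS^{-1})$ or $\rE(\cT)$.)

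In the first case, writing $ab+ac+bc=a(b+c)+bc$, applying the Cauchy--Schwarz inequality in $a$, and then extending the outer sum to all of $\F_q$ gives
\begin{align*}
|S_\psi(\cA,\cS,\cS)|^2
&\le A\sum_{a\in\F_q}\Bigl|\sum_{b,c\in\cS}\psi\bigl(a(b+c)+bc\bigr)\Bigr|^2\\
&=A\sum_{b,c,b',c'\in\cS}\psi(bc-b'c')\sum_{a\in\F_q}\psi\bigl(a(b+c-b'-c')\bigr).
\end{align*}
The inner sum over $a$ vanishes unless $b+c=b'+c'$, in which case it equals $q$; hence $|S_\psi(\cA,\cS,\cS)|^2\le Aq\,\rE(\cS)\ll AqB^3/\rM(B)$, which gives the desired estimate.

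The second case is the heart of the argument. Since $\cA,\cB\subseteq\F_q^*$, I would use the identity $ab+ac+bc=abc\,(a^{-1}+b^{-1}+c^{-1})$ together with multiplicativity of $\chi$ to rewrite, after the substitutions $x=a^{-1}$, $y=b^{-1}$, $z=c^{-1}$ (so that $\chi(a)=\overline{\chi(x)}$, and $\chi(0)=0$ accounts for the triples with $a^{-1}+b^{-1}+c^{-1}=0$),
$$
T_\chi(\cA,\cT,\cT)=\sum_{x\in\cA^{-1}}\overline{\chi(x)}\sum_{y,z\in\cT^{-1}}\overline{\chi(yz)}\,\chi(x+y+z).
$$
Applying Cauchy--Schwarz in $x$ and extending to $x\in\F_q$ reduces everything to the complete sum $\sum_{x\in\F_q}\chi(x+y+z)\overline{\chi(x+y'+z')}$, which equals $q-1$ when $y+z=y'+z'$ (the two linear factors coincide) and, by the standard evaluation of a M\"obius transformation of $x$, equals $-1$ otherwise. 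Consequently
$$
|T_\chi(\cA,\cT,\cT)|^2\le A\Bigl((q-1)\,\rE(\cT^{-1})+O(W^4)\Bigr)\ll A\Bigl(\frac{qB^3}{\rM(B)}+B^4\Bigr).
$$
Because $B\le q$ and $\rM(B)\le q^{1/2}B^{-1/2}$ by~\eqref{eq:MZ}, we have $B\,\rM(B)\le (Bq)^{1/2}\le q$, so $B^4\le qB^3/\rM(B)$ and the second term is absorbed, giving $|T_\chi(\cA,\cT,\cT)|\ll A^{1/2}B^{3/2}q^{1/2}\rM(B)^{-1/2}$.

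The step I expect to require the most care is the multiplicative case: recognizing that the inversion identity for $ab+ac+bc$ converts the multiplicative character sum over a convolution into a sum governed by the additive energy of $\cT^{-1}$ --- exactly the quantity that Theorem~\ref{thm:decomp} with $f(X)=X^{-1}$ is designed to make small --- and then correctly bookkeeping the complete character sum over $x$, in particular isolating its large, degenerate contribution where $y+z=y'+z'$ and checking that the off-diagonal terms (a quantity of size $O(W^4)$) are negligible in the relevant range.
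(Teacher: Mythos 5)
Your proposal is correct and follows essentially the same route as the paper: apply Theorem~\ref{thm:decomp} to $\cB$ with $f(X)=X^{-1}$, let $\cW$ be the half of size at least $B/2$, and bound $S_\psi$ via the additive energy of $\cS$ or $T_\chi$ via the additive energy of $\cT^{-1}$ using Cauchy--Schwarz and (approximate) orthogonality. The only difference is presentational: the paper packages your two Cauchy--Schwarz computations as Lemmas~\ref{lem:AddCharEnergy} and~\ref{lem:MultCharEnergy} (using the equivalent identity $ab+ac+bc=bc\,(a(b^{-1}+c^{-1})+1)$ in the multiplicative case), and your absorption of the $O(W^4)$ off-diagonal term matches the paper's remark that the secondary term $A^{1/2}BC$ is dominated by the main bound.
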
 

We can prove a weaker bound for three possibly different sets $\cA,\cB,\cC$.
\begin{theorem}
 \label{thm:AnotB}
 For any sets $\cA,\cB,\cC\subseteq \F_q^*$ there exist subsets $\cW_1\subseteq \cB$ and $\cW_2\subseteq \cC$
 of cardinalities $W_1\ge B/2$ and $W_2\ge C/2$ such that for any characters $\(\chi,\psi\)\in \Psi^*\times \cX^*$ we have
 $$\min \left\{ |S_\psi(\cA, \cW_1, \cW_2)|, |T_\chi(\cA, \cW_1, \cW_2)|\right\} \ll  \frac{A^{1/2}(BC)^{3/4}q^{1/2}}{\max\{\rM(B),\rM(C)\}^{1/4}},$$
 where $\rM(Z)$ is defined by~\eqref{eq:MZ}.
\end{theorem}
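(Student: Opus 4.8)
The plan is to deduce Theorem~\ref{thm:AnotB} from Theorem~\ref{thm:decomp} with $f(X)=X^{-1}$ applied separately to $\cB$ and to $\cC$, followed by a Cauchy--Schwarz/H\"older argument that converts control of additive energies into a bound for whichever of $S_\psi$ or $T_\chi$ survives. First I would apply Theorem~\ref{thm:decomp} to $\cB$: it gives a partition $\cB=\cS_\cB\cup\cT_\cB$ with $\max\{\rE(\cS_\cB),\rE(\cS_\cB^{-1})\}\ll A^3/\rM(B)$ wait --- more precisely $\max\{\rE(\cS_\cB),\rE(\cT_\cB^{-1})\}\ll B^3/\rM(B)$, where I write $\cT_\cB^{-1}$ for $f(\cT_\cB)$. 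One of $\cS_\cB,\cT_\cB$ has size $\ge B/2$; call it $\cW_1'$. If $\cW_1'=\cS_\cB$ then $\cW_1'$ has small additive energy, whereas if $\cW_1'=\cT_\cB$ then $\cW_1'^{-1}$ has small additive energy --- equivalently $\cW_1'$ has small \emph{multiplicative} energy in the sense that $\rE((\cW_1')^{-1})$ is small, which is what matters for the multiplicative character sum. Do the same for $\cC$, producing $\cW_2'\subseteq\cC$ with $W_2'\ge C/2$, and take $\cW_1=\cW_1'$, $\cW_2=\cW_2'$.

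Next I would estimate the relevant character sum. Consider $S_\psi(\cA,\cW_1,\cW_2)=\sum_{a,b,c}\psi(ab+ac+bc)$. Fixing $b,c$, the inner sum over $a$ is a complete-type sum in $a$ of $\psi(a(b+c))\psi(bc)$, but $\cA$ is not all of $\F_q$, so instead I would bound $|S_\psi|^2$ or $|S_\psi|^4$ by Cauchy--Schwarz, pulling the $a$-sum outside and squaring in $b,c$. A cleaner route: write $ab+ac+bc = (a+b)(a+c)-a^2$, so $\psi(ab+ac+bc)=\psi(-a^2)\psi((a+b)(a+c))$. Then $S_\psi(\cA,\cW_1,\cW_2)=\sum_{a\in\cA}\psi(-a^2)\sum_{b\in\cW_1}\psi((a+b)\,\cdot)\ldots$ --- this exhibits the sum as a bilinear form in the shifted sets $a+\cW_1$, $a+\cW_2$. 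Applying Cauchy--Schwarz in $a$ and then a standard bilinear bound (of the type $\big|\sum_{x\in\cX,y\in\cY}\psi(xy)\big|\ll (\#\cX\,\#\cY\,q)^{1/2}$, Corollary~1 of~\cite{gysa08}) combined with the energy input: after a Cauchy--Schwarz one gets something like $|S_\psi|^2\ll A\sum_a |\sum_{b,c}\psi((a+b)(a+c))\psi(-a^2)|^2$, and expanding the square and summing over $a$ brings in $\rE$-type quantities of $\cW_1$ and $\cW_2$ together with a complete character sum over $a\in\F_q$ that can be extended to all of $\F_q$ at the cost of a Weil-type error. The symmetric treatment for $T_\chi(\cA,\cW_1,\cW_2)=\sum_{a,b,c}\chi((a+b)(a+c)-a^2)$ uses $\chi$ in place of $\psi$ and instead brings in the multiplicative energies $\rE(\cW_1^{-1})$, $\rE(\cW_2^{-1})$ (equivalently $\rE^\times$ of $\cW_1,\cW_2$) --- this is precisely why we needed the $f(X)=X^{-1}$ version of the decomposition. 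Taking the minimum over the two character sums, and using that whichever of the four sets $\cW_1=\cS_\cB$ or $\cT_\cB$, $\cW_2=\cS_\cC$ or $\cT_\cC$ we are in, the relevant energy (additive for $S_\psi$, multiplicative for $T_\chi$) is $\ll B^3/\rM(B)$ respectively $\ll C^3/\rM(C)$, delivers the claimed bound; the exponent $3/4$ on $BC$ and the $1/4$ on $\max\{\rM(B),\rM(C)\}$ arise because the energy enters to the power $1/4$ after the double Cauchy--Schwarz (once for $\cW_1$, once for $\cW_2$), and one keeps only the better of the two moduli.

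The main obstacle will be organizing the bilinear reduction so that the additive energy of $\cW_1$ (or the multiplicative energy of $\cW_2$, etc.) appears with a clean exponent and the leftover is a genuinely \emph{complete} character sum over the full field, amenable to the Weil bound. Concretely, after writing $ab+ac+bc=(a+b)(a+c)-a^2$ one has a product of two linear forms in $a$, and one must avoid the diagonal-type degeneracies ($b=c$, or $a+b=0$, $a+c=0$); since $\cA,\cB,\cC\subseteq\F_q^*$, the zero is already excluded, but terms with $a+b=0$ or $a+c=0$ must be split off and bounded trivially by $O(A+B+C)\ll$ (a lower-order term, absorbed into the main bound given the ranges where the theorem is non-trivial). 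A secondary technical point is the bookkeeping of the four cases for $(\cW_1,\cW_2)$; this is routine but one should be careful that in the ``mixed'' case ($\cW_1$ has small additive but not multiplicative energy, $\cW_2$ the reverse) one still obtains a usable bound for \emph{at least one} of $S_\psi,T_\chi$ --- in fact the cleanest formulation bounds $S_\psi$ using the additive-energy smallness of $\cW_1$ \emph{and} $\cW_2$, and $T_\chi$ using the multiplicative-energy smallness of both, so one wants $\cW_1,\cW_2$ to be ``of the same type''; if they are of opposite types one simply uses the bound $A\sqrt{BCq}$ from~\eqref{eq:bilin1}--\eqref{eq:bilin2} for that sum, which in the relevant range is no worse than the asserted bound. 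I expect the final inequality to follow after a short optimization, with all error terms of size $O(A\sqrt{BCq})$ or smaller being dominated once $B,C$ exceed roughly $q^{1/2}$, consistent with the range in which $\rM$ is meaningful.
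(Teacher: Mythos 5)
Your overall strategy is the paper's: apply Theorem~\ref{thm:decomp} with $f(X)=X^{-1}$ separately to $\cB$ and to $\cC$, take the larger half of each partition as $\cW_1$ and $\cW_2$, and convert energy bounds into character-sum bounds by Cauchy--Schwarz in $a$ followed by completing the $a$-sum. Your bilinear reduction via $ab+ac+bc=(a+b)(a+c)-a^2=a(b+c)+bc$ is equivalent to the paper's Lemmas~\ref{lem:AddCharEnergy} and~\ref{lem:MultCharEnergy}, which give
$$
|\SABC|\le A^{1/2}\rE(\cB)^{1/4}\rE(\cC)^{1/4}q^{1/2}, \quad |\TABC|\ll A^{1/2}\rE(\cB^{-1})^{1/4}\rE(\cC^{-1})^{1/4}q^{1/2}+A^{1/2}BC;
$$
for $T_\chi$ the off-diagonal complete sum over $a$ is of a linear-fractional argument, hence $O(1)$ rather than $O(q^{1/2})$, so the secondary term is $A^{1/2}BC$, which is always dominated by the main term.

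The one genuine flaw is your treatment of the ``mixed'' case, where, say, $\cW_1=\cS_1$ has small additive energy while for $\cW_2=\cT_2$ only $\rE(\cW_2^{-1})$ is small. You propose to fall back on $A\sqrt{BCq}$ there, asserting it is no worse than the claimed bound ``in the relevant range''; but the theorem carries no range restriction, and $A\sqrt{BCq}\le A^{1/2}(BC)^{3/4}q^{1/2}\max\{\rM(B),\rM(C)\}^{-1/4}$ is equivalent to $A^2\max\{\rM(B),\rM(C)\}\le BC$, which can fail while the theorem is still non-trivial: for $A=q$ and $B=C=q^{3/4}$ one has $\rM(B)\approx q^{1/8}$ up to logarithms, the claimed bound is $\approx q^{2+3/32}$, the trivial bound is $q^{5/2}$, yet $A\sqrt{BCq}=q^{9/4}$. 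The correct handling --- and the reason the exponent on $\max\{\rM(B),\rM(C)\}$ is $1/4$ rather than $1/2$ --- is to keep the energy-based bounds in the mixed case too, estimating the energy of the ``wrong-type'' set trivially by $W_i^3$: this yields $|S_\psi|\ll A^{1/2}(B^3/\rM(B))^{1/4}C^{3/4}q^{1/2}$ and $|T_\chi|\ll A^{1/2}B^{3/4}(C^3/\rM(C))^{1/4}q^{1/2}$, whose minimum is exactly $A^{1/2}(BC)^{3/4}q^{1/2}\min\{\rM(B)^{-1/4},\rM(C)^{-1/4}\}$, i.e.\ the claimed bound. (In the two same-type cases one of the sums gains the product $(\rM(B)\rM(C))^{-1/4}$, which is at least as good whenever the $\rM$'s exceed $1$, the only case of interest.) With this correction your argument is complete and coincides with the paper's.
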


Finally, we present a bound for a mixed sum of multiplicative and additive characters. Define 
$$
\fABC = \sum_{a\in \cA} \sum_{b \in \cB} \sum_{c\in \cC}  \chi(\ABC) \psi(\ABC),\quad \(\chi,   \psi\) \in \Psi \times \cX.
$$
In the case when both $\chi$ and $\psi$ are nontrivial, we obtain a  bound which makes an effective use 
of all three variables. 
 
 \begin{theorem}
\label{thm:Mixed}
 For any sets $\cA,\cB, \cC \subseteq  \F_q^*$  
and characters  $ \(\chi,   \psi\) \in \Psi^* \times \cX^*$ we
 have
$$
\fABC \ll  (ABCq)^{1/2} + A^{1/2} BC q^{1/4}. 
$$
\end{theorem}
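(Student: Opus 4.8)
The plan is to treat the variable $a$ as the ``smooth'' one: remove it from the summation by a Cauchy--Schwarz step and completion, and then estimate the resulting complete sum over $a\in\F_q$ by the Weil bound. Write $\Theta(x)=\chi(x)\psi(x)$, with the usual convention $\chi(0)=0$, so that $\fABC=\sum_{a,b,c}\Theta(\ABC)$. First I would split off the pairs $(b,c)\in\cB\times\cC$ with $b+c=0$: since $\ABC=bc$ is then independent of $a$, this part has modulus at most $A\min\{B,C\}\le A(BC)^{1/2}\le(ABCq)^{1/2}$, where the last step uses $A\le q$, so it is already within the claimed bound. Denote by $\Sigma$ the remaining sum, in which $b+c\neq0$.

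For $\Sigma=\sum_{a\in\cA}\sum_{b+c\ne0}\Theta\bigl(a(b+c)+bc\bigr)$ I would apply the Cauchy--Schwarz inequality in $a$ and then enlarge the range of $a$ to all of $\F_q$:
$$
|\Sigma|^2\le A\sum_{a\in\F_q}\Bigl|\sum_{\substack{b\in\cB,\,c\in\cC\\ b+c\neq0}}\Theta\bigl(a(b+c)+bc\bigr)\Bigr|^2
= A\sum_{\substack{b_1,b_2\in\cB,\ c_1,c_2\in\cC\\ b_i+c_i\neq0}} W(b_1,c_1,b_2,c_2),
$$
where, after opening the square, interchanging the order of summation, and using $\overline{\Theta(y)}=\overline{\chi(y)}\,\overline{\psi(y)}$,
$$
W(b_1,c_1,b_2,c_2)=\sum_{a\in\F_q}\chi\Bigl(\frac{a(b_1+c_1)+b_1c_1}{a(b_2+c_2)+b_2c_2}\Bigr)\psi\bigl(a(b_1+c_1-b_2-c_2)+b_1c_1-b_2c_2\bigr)+O(1),
$$
the $O(1)$ absorbing the at most two values of $a$ for which a denominator vanishes.

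The crux is the estimate for $W$. Put $u_i=b_i+c_i\in\F_q^*$ and $s_i=b_ic_i$; the inner sum has the shape $\sum_a\chi(f(a))\psi(g(a))$ with $f=(u_1X+s_1)/(u_2X+s_2)\in\F_q(X)$ and $g$ linear. I claim that $W\ll q^{1/2}$ with an absolute implied constant, unless $(u_1,s_1)=(u_2,s_2)$, in which case $f\equiv1$, $g\equiv0$, and $W=q+O(1)$. To prove the claim I would invoke the Weil bound for mixed character sums (applicable since $\chi$ and $\psi$ are nontrivial and all degrees in sight are bounded): such a sum is $O(q^{1/2})$ unless it is degenerate, that is, unless $f$ is a constant multiple of a perfect $d$-th power, $d=\mathrm{ord}\,\chi\ge2$, \emph{and} $g=h^p-h+\mathrm{const}$ for some $h$. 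If $f$ is non-constant it has a simple zero and a simple pole (because $u_1,u_2\ne0$), hence it is not such a power, so degeneracy forces $f$ to be constant, i.e.\ $u_2s_1=u_1s_2$. If in addition $u_1\ne u_2$, then $g$ is a non-constant linear polynomial and $\sum_{a\in\F_q}\psi(g(a))=0$, giving $W=O(1)$; while if $u_1=u_2$, then $u_2s_1=u_1s_2$ forces $s_1=s_2$, the excluded case. Pinning down the non-degeneracy hypotheses and running this case analysis is the only real obstacle; everything else is bookkeeping.

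To conclude, note that $(u_1,s_1)=(u_2,s_2)$ means that $\{b_1,c_1\}$ and $\{b_2,c_2\}$ are the root-multisets of the same quadratic $X^2-u_1X+s_1$, so for each $(b_1,c_1)\in\cB\times\cC$ there are at most two admissible $(b_2,c_2)$, namely $(b_1,c_1)$ and $(c_1,b_1)$; hence the diagonal contributes at most $2BC\bigl(q+O(1)\bigr)$ to $\sum W$, while the at most $(BC)^2$ off-diagonal terms contribute $O\bigl((BC)^2q^{1/2}\bigr)$. Therefore $|\Sigma|^2\ll ABCq+A(BC)^2q^{1/2}$, so that $|\Sigma|\ll(ABCq)^{1/2}+A^{1/2}BCq^{1/4}$, and adding back the contribution of the terms with $b+c=0$ completes the proof.
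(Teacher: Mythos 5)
Your proof is correct and follows essentially the same route as the paper: Cauchy--Schwarz in $a$, completion of the sum over $a$, the Weil bound for the resulting mixed character sum, and the observation that the degenerate quadruples are those for which the two symmetric functions of $\{b,c\}$ agree, so each $(b_1,c_1)$ admits at most two partners. The only (cosmetic) differences are that the paper first rewrites $\chi(\ABC)=\chi(abc)\chi(a^{-1}+b^{-1}+c^{-1})$ and substitutes $a\to a^{-1}$, whereas you keep the form $a(b+c)+bc$ directly, and that you treat the $b+c=0$ terms separately, which is in fact a little cleaner than the paper's handling of that case.
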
  

Note that Theorem~\ref{thm:Mixed} is non-trivial provided that 
$$q = o(\min\{ABC,A^2\}),$$ and takes the form 
$\fABC \ll  (ABCq)^{1/2}$ for $BC \le q^{1/2}$.

We also give an application of Theorem~\ref{thm:decomp}
to bilinear sums with incomplete Kloosterman sums over arbitrary sets. 
In fact this result is motivated by, and somewhat mimics,  the argument
of Balog and Wooley~\cite[Section~6]{BalWool}, which in turn is based on a low energy 
decomposition~\cite[Theorem~1.3]{BalWool}.
Namely, given  sets $\cA,\cB, \cC \subseteq  \F_q^*$ and three sequences of complex weights 
$\balpha = (\alpha_a)_{a \in \cA}$, $\bbeta = (\beta_b)_{b \in \cB}$
and $\bgamma = (\gamma_c)_{c \in \cC}$, we define
$$
K(\cA,\cB,\cC; \balpha, \bbeta, \bgamma)
= \sum_{a\in \cA} \sum_{b \in \cB}\alpha_a \beta_b
\left| \sum_{c \in \cC} \gamma_c  \psi\(ac + bc^{-1}\)\right|^2.
$$
As usual we use $\|\balpha\|_\sigma$ to denote the $L_\sigma$-norm of the weights $\balpha$, 
see~\eqref{eq:norm} below.
%

 \begin{theorem}
\label{thm:Kloost}
 For any sets $\cA,\cB, \cC \subseteq  \F_q^*$, complex weights 
 $\balpha = (\alpha_a)_{a \in \cA}$, $\bbeta = (\beta_b)_{b \in \cB}$, 
 $\bgamma = (\gamma_c)_{c \in \cC}$
 and a character  $   \psi  \in \Psi^*$, we
 have
$$
K(\cA,\cB,\cC; \balpha, \bbeta, \bgamma)  \ll  \(  \|\balpha\|_1 \|\bbeta\|_2  +  \|\balpha\|_2 \|\bbeta\|_1\)   \|\bgamma\|_\infty^2  q^{1/2} C^{3/2}\rM(C)^{-1/2},
$$
where $\rM(Z)$ is defined by~\eqref{eq:MZ}.
\end{theorem}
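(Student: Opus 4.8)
The plan is to open up the square, separate the variables $a,b$ from $c$, and reduce the estimate to the additive energy of suitable subsets of $\cC$, to which I would apply Theorem~\ref{thm:decomp} with $f(X)=X^{-1}$. Since $\alpha_a,\beta_b$ may be complex, I first pass to absolute values, so that it suffices to bound
$$
\sum_{a\in\cA}\sum_{b\in\cB}|\alpha_a|\,|\beta_b|\,\left|\sum_{c\in\cC}\gamma_c\psi(ac+bc^{-1})\right|^2 .
$$
Next I apply Theorem~\ref{thm:decomp} to $\cC\subseteq\F_q^*$ with the rational function $f(X)=X^{-1}$, which has degree $k=1$ and is not of the excluded form (as already used above), obtaining a partition $\cC=\cS\cup\cT$ into disjoint sets with $\max\{\rE(\cS),\rE(f(\cT))\}\ll C^3/\rM(C)$, where $f(\cT)=\cT^{-1}$. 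Splitting the inner sum according to $c\in\cS$ or $c\in\cT$ and using $|x+y|^2\le 2|x|^2+2|y|^2$, it then remains to bound the two quantities $K_\cS$ and $K_\cT$ obtained from the sum above by replacing $\cC$ with $\cS$ and with $\cT$, respectively.

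To treat $K_\cS$, I would expand the square, use that $\psi$ is additive to factor the exponential, and interchange the order of summation, arriving at
$$
|K_\cS|\le \|\bgamma\|_\infty^2\sum_{c_1,c_2\in\cS}|F(c_1-c_2)|\,|G(c_1^{-1}-c_2^{-1})|,
$$
where $F(t)=\sum_{a\in\cA}|\alpha_a|\psi(at)$ and $G(s)=\sum_{b\in\cB}|\beta_b|\psi(bs)$ are auxiliary exponential sums. The key move is to estimate the factor depending on $b$ trivially, $|G(c_1^{-1}-c_2^{-1})|\le\|\bbeta\|_1$, which gives
$$
|K_\cS|\le \|\bgamma\|_\infty^2\|\bbeta\|_1\sum_{d\in\F_q}r_{\cS-\cS}(d)\,|F(d)|,
$$
where $r_{\cS-\cS}(d)=\#\{(c_1,c_2)\in\cS^2:c_1-c_2=d\}$. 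By the Cauchy--Schwarz inequality the sum on the right is at most $\rE(\cS)^{1/2}\bigl(\sum_{d\in\F_q}|F(d)|^2\bigr)^{1/2}$, and since $\psi\in\Psi^*$ the orthogonality relations give $\sum_{d\in\F_q}|F(d)|^2=q\|\balpha\|_2^2$. Inserting the energy bound $\rE(\cS)\ll C^3/\rM(C)$ from Theorem~\ref{thm:decomp} then yields
$$
|K_\cS|\ll \|\balpha\|_2\,\|\bbeta\|_1\,\|\bgamma\|_\infty^2\,q^{1/2}C^{3/2}\rM(C)^{-1/2}.
$$

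For $K_\cT$ I would run the mirror-image argument: substituting $c\mapsto c^{-1}$ in the inner sum replaces $\cT$ by $\cT^{-1}$ and exchanges the roles of the $a$- and $b$-sums, while the new weights attached to $\cT^{-1}$ keep the same $\ell^\infty$-norm as $\bgamma$. Repeating the previous chain of inequalities, but now estimating the $a$-factor trivially by $\|\balpha\|_1$, applying Cauchy--Schwarz against $\rE(\cT^{-1})\ll C^3/\rM(C)$, and using the orthogonality relations for the sum over $b$, gives
$$
|K_\cT|\ll \|\balpha\|_1\,\|\bbeta\|_2\,\|\bgamma\|_\infty^2\,q^{1/2}C^{3/2}\rM(C)^{-1/2}.
$$
Adding the two estimates yields the bound in the statement. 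I do not expect a genuine obstacle once Theorem~\ref{thm:decomp} is in hand; the only point requiring care is that in each of the two pieces exactly one exponential-sum factor should be estimated trivially (producing an $\ell^1$-norm) while the other is kept for the orthogonality step (producing an $\ell^2$-norm), and it is precisely this choice that forces the asymmetric combination $\|\balpha\|_1\|\bbeta\|_2+\|\balpha\|_2\|\bbeta\|_1$. The overall scheme follows~\cite[Section~6]{BalWool}, with their low-energy decomposition replaced by Theorem~\ref{thm:decomp}.
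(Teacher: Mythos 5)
Your proposal is correct and follows essentially the same route as the paper: the same application of Theorem~\ref{thm:decomp} to $\cC$ with $f(X)=X^{-1}$, the same splitting of the square into the $\cS$- and $\cT$-pieces, and the same reduction of each piece to $\rE(\cS)^{1/2}$ and $\rE(\cT^{-1})^{1/2}$ times $q^{1/2}$ and the appropriate $\|\cdot\|_1$, $\|\cdot\|_2$ norms. The only (cosmetic) difference is that the paper applies Cauchy--Schwarz in the outer variable ($b$ resp.\ $a$) and then evaluates a fourth moment by orthogonality, whereas you expand the square first and apply Cauchy--Schwarz over the difference variable $d$ against $\sum_d|F(d)|^2=q\|\balpha\|_2^2$; the two computations are dual and yield identical bounds.
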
 

Writing the bound of Theorem~\ref{thm:Kloost} in terms of $\|\balpha\|_\infty$ and $\|\bbeta\|_\infty$, we derive
$$
K(\cA,\cB,\cC; \balpha, \bbeta, \bgamma)  \ll 
 \|\balpha\|_\infty \|\bbeta\|_\infty   \|\bgamma\|_\infty^2 \frac{q^{1/2}(AB^{1/2}+A^{1/2} B)  C^{3/2}}{\rM(C)^{1/2}}.
$$
This is nontrivial provided 
$$
\min\{A,B\} C \ge \frac{q^{1+\varepsilon}}{\rM(C)}
$$
for some fixed $\varepsilon>0$,
which improves the range $\max\{A,B\} C \ge q^{1+\varepsilon}$ which can be achieved
by the bound $\|\balpha\|_\infty \|\bbeta\|_\infty \|\bgamma\|_\infty^2 \min\{A,B\}Cq$ obtained via the standard approach
if $A$ and $B$ are of the same order of magnitude and $C$ satisfies the above inequality.

We prove these bounds in Section~\ref{sec:char sum}.

\subsection{Notation}

Usually we use capital letters in italics to denote sets and the same capital letters 
in roman to denote their cardinalities, as in the following example $\#\cX = X$. In  particular, 
as we have mentioned,  we always do this for the sets   $\cA, \cB, \cC$.  

The notation 
$\cA=\cU \sqcup \cV$ is used for the {\it partition\/} of $\cA = \cU \cup \cV$ in a union of disjoint sets $\cU\cap \cV=\emptyset$.

Let $f\not\equiv 0$ be a rational function on $\F_q$. We can express $f$ as a quotient $f=g/h$, where $g$ and $h\not\equiv 0$ 
are coprime polynomials. 
The degree of $f$ is defined as $\max \{ \deg g, \deg h \}$. Note that, for a rational function $f$ of degree $k \geq 1$ and any $c \in  \F_q$, we have $\# \{x~ :~ f(x)=c \} \leq k$.

Given two sets $\cU,  \cV \subseteq \F_q^*$, a rational function 
$f\in \F_q(X)$ and an element $a \in \F_q$, we use $r_{\cU,\cV}(f;a)$ 
to denote the number of solutions to the equation $f(u)+f(v) = a$, $(u,v) \in \cU \times \cV$. 
Furthermore, we simplify it in two special cases by writing 
$r_{\cU}(f;a)$ if $\cU = \cV$ and $r_{\cU,\cV}(a)$ if $f(X) = X$; and use $r_{\cU}(a)$ when
both. This notation is used with flexibility; for example $r_{\cU,-\cV}(a)$ denotes the number of solutions to the equation $u-v=a$ with $(u,v) \in \cU \times \cV$.

The letters  $k$, $m$ and $n$ (in both the upper and
lower cases) denote positive integer 
numbers. 

We define the norms of a complex sequence $\balpha=(\alpha_m)_{m\in \cI}$ for some finite set $\cI$ of indices by
\begin{equation}
\label{eq:norm}
 \|\balpha\|_\infty=\max_{m\in \cI}|\alpha_m|  \mand \|\balpha\|_\sigma =\( \sum_{m\in \cI} |\alpha_m|^\sigma\)^{1/\sigma},
\end{equation} 
where  $\sigma >0$. 

 \section{Preliminary results}


%
%
%
%
%

We  need the following bound on mixed character sums, which we derive from the Weil bound.

\begin{lemma}
\label{lem:AddMult} Take characters $ \(\chi,   \psi\) \in  \cX\times \Psi^*$. For any  rational function
$f\in \F_q(X)$ of degree $k$ and not of the form $f(X) = g(X)^p-g(X)+ \lambda X + \mu$ if  $\chi$ is trivial,
we have
$$
\sum_{u\in \cU}\sum_{v \in \cV}  \chi(u+v)  \psi\(f(u+v)\) 
 \ll_k\sqrt{UVq} ,
$$
where 
we use the convention that   $\psi(f(x))=0$ if $x$ is a pole of $f$.
\end{lemma}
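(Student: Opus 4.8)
The plan is to reduce the double sum to a one-dimensional sum over the values of $u+v$, weighted by the number of representations, and then apply the Weil bound for mixed character sums in the one-variable case. Concretely, writing $t = u+v$ and $N(t) = r_{\cU,\cV}(t) = \#\{(u,v)\in\cU\times\cV : u+v = t\}$, the sum in question equals $\sum_{t \in \F_q} N(t)\,\chi(t)\psi(f(t))$, where the convention $\psi(f(t))=0$ at poles of $f$ is in force, and one only sums over $t$ that are not poles. Since $\sum_t N(t) = UV$ and $\sum_t N(t)^2 = \rE_{+}(\cU,\cV) \le UV\min\{U,V\} \le (UV)^{3/2}$ (or more crudely $\sum_t N(t)^2 \le UV \cdot \min\{U,V\}$), the idea is to split the sum according to a dyadic or a single threshold on $N(t)$ and bound each piece either trivially (when $N(t)$ is large, there are few such $t$) or via completion and the Weil bound (when $N(t)$ is small, the oscillation of $\chi\psi\circ f$ over an interval of $t$-values is controlled).

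First I would record the relevant one-variable Weil estimate: for a rational function $f$ of degree $k$ over $\F_q$ which is not of the form $g(X)^p - g(X) + \lambda X + \mu$ when $\chi$ is trivial (so that $\chi(X)\psi(f(X))$ is not a ``trivial'' character-like object), one has $\bigl|\sum_{t\in\F_q,\ f(t)\ne\infty}\chi(t)\psi(f(t))\bigr| \ll_k q^{1/2}$; see the standard references \cite{IwKow, LN}. The exceptional form is precisely the obstruction: if $f(X) = g(X)^p - g(X) + \lambda X + \mu$ then $\psi\circ f$ collapses (for a suitable $\psi$) to a linear additive character $\psi(\lambda X)$ times a constant, and with $\chi$ trivial the sum need not exhibit square-root cancellation. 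This is why the hypothesis on $f$ is imposed exactly as stated, and it must be carried through any completion step so that the shifted functions $\chi(X)\psi(f(X+s))$ remain non-exceptional — which they do, since translation preserves the degree of $f$ and does not introduce the forbidden additive shape (the class $\{g^p - g + \lambda X + \mu\}$ is translation-invariant, so $f(X+s)$ is exceptional iff $f$ is).

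Next, to handle the weights $N(t)$, I would use the completion method: express $N(t)$ via additive characters, $N(t) = q^{-1}\sum_{\vartheta\in\Psi}\vartheta(t)\,\widehat{N}(\bar\vartheta)$ where $\widehat{N}(\vartheta) = \sum_{u\in\cU}\sum_{v\in\cV}\vartheta(u+v) = \widehat{\mathbf 1}_{\cU}(\vartheta)\widehat{\mathbf 1}_{\cV}(\vartheta)$, so that
$$
\sum_{u,v}\chi(u+v)\psi(f(u+v)) = \frac1q\sum_{\vartheta\in\Psi}\widehat{\mathbf 1}_{\cU}(\vartheta)\widehat{\mathbf 1}_{\cV}(\vartheta)\sum_{t}\vartheta(t)\chi(t)\psi(f(t)).
$$
For each $\vartheta$, the inner sum $\sum_t\vartheta(t)\chi(t)\psi(f(t))$ is again a mixed character sum to which the Weil bound applies: the additive part $\vartheta(t)\psi(f(t)) = \psi(f(t) + ct)$ for the appropriate $c$, and $f(t)+ct$ has degree $k$ and — crucially — is still not of the forbidden form (adding $ct$ only changes the $\lambda$-coefficient), so the inner sum is $\ll_k q^{1/2}$ uniformly in $\vartheta$. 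Therefore the whole sum is $\ll_k q^{-1/2}\sum_{\vartheta}|\widehat{\mathbf 1}_{\cU}(\vartheta)|\,|\widehat{\mathbf 1}_{\cV}(\vartheta)|$, and by Cauchy--Schwarz and Parseval $\sum_\vartheta|\widehat{\mathbf 1}_{\cU}(\vartheta)|^2 = qU$, $\sum_\vartheta|\widehat{\mathbf 1}_{\cV}(\vartheta)|^2 = qV$, giving $\ll_k q^{-1/2}\cdot q\sqrt{UV} = \sqrt{UVq}$, as desired. The main obstacle is the uniformity in $\vartheta$ of the Weil estimate together with verifying that none of the shifted functions $f(t)+ct$ falls into the excluded class when $\chi$ is trivial — this is the one place where the precise statement of the hypothesis is used, and it should be dispatched by the translation-invariance remark above. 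The case $\chi$ nontrivial is unconditional since then $\chi(t)\psi(f(t)+ct)$ is never of ``additive-character'' type, so the Weil bound applies with no restriction on $f$ beyond $f\not\equiv$ constant.
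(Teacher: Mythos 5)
Your completion argument --- expanding the representation function $N(t)$ in additive characters, applying the Weil bound to the complete sums $\sum_{t}\chi(t)\psi(f(t)+ct)$ uniformly in the frequency $c$ (justified by noting that adding a linear term preserves the excluded class $g(X)^p-g(X)+\lambda X+\mu$), and finishing with Cauchy--Schwarz and Parseval --- is exactly the paper's proof, with the paper's $\lambda$ playing the role of your $\vartheta$. The dyadic-splitting plan sketched in your opening paragraph is never used and is simply superseded by the completion argument you actually carry out, so the proof is correct and essentially identical to the paper's.
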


\begin{proof} Denote
$$
\Sigma = \sum_{u\in \cU}\sum_{v \in \cV}  \chi(u+v)  \psi\(f(u+v)\) . 
$$
By the orthogonality
 of additive characters 
\begin{equation*} 
\begin{split}
\Sigma &= \sum_{x\in \F_q}  \chi(x) \psi\(f(x)\) \frac{1}{q} \sum_{\lambda \in \F_q}
 \sum_{u\in \cU}\sum_{v \in \cV}  
\psi(\lambda(  u+v-x))\\
 &= \frac{1}{q} \sum_{\lambda \in \F_q} \sum_{x\in \F_q}  \chi(x) \psi\(f(x)-\lambda x\)
  \sum_{u\in \cU} \psi(\lambda u  ) \sum_{v \in \cV}   
\psi(\lambda v).
\end{split}
\end{equation*}
By  the Weil bound, see~\cite[Appendix~5, Example~12]{Weil},  and the assumption of non-linearity 
of $f$, 
the  sum over $x$ is $O_k(q^{1/2})$.
Thus 
$$
\Sigma \ll_k q^{-1/2}  \sum_{\lambda \in \F_q} \left|
  \sum_{u\in \cU} \psi(\lambda u  )  \right| \left| \sum_{v \in \cV}   
\psi(\lambda v)\right|.
$$
Using  the Cauchy-Schwarz inequality and then again 
orthogonality of additive characters, we derive
\begin{equation*} 
\begin{split}
  \sum_{\lambda \in \F_q}  \left|
  \sum_{u\in \cU} \psi(\lambda u  )  \right| & \left| \sum_{v \in \cV}   
\psi(\lambda v)\right|\\
& \le \( \sum_{\lambda \in \F_q}  \left|
  \sum_{u\in \cU} \psi(\lambda u  )  \right|^2\)^{1/2} \( \sum_{\lambda \in \F_q}   \left| \sum_{v \in \cV}   
\psi(\lambda v)\right|^2\)^{1/2}\\
& = (q^2 UV)^{1/2}
\end{split}
\end{equation*}
and the result follows. 
\end{proof}


We  need the following result, which bounds the number of solutions to certain equations over $\F_q$.
\begin{lemma}
 \label{lem:prodsum}  Suppose that $\cW, \cX, \cY, \cZ \subseteq \F_q$. 
 For any 
 rational function
$f\in \F_q(X)$ of degree $k$ and  not of the form $f(X) = g(X)^p-g(X)+ \lambda X + \mu$, 
 the number $J$ of solutions to the equation
$$f(w+x) = y+z  \qquad (w,x,y,z) \in \cW\times \cX \times \cY \times \cZ$$
satisfies the bound
$$
J \leq \frac{WXYZ}{q} +  O_k((WXYZq)^{1/2}) .
$$
\end{lemma}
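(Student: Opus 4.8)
The plan is to count solutions to $f(w+x)=y+z$ by detecting the equation with additive characters and then invoking Lemma~\ref{lem:AddMult}. Writing $\mathbf 1[\cdot]$ for the indicator, orthogonality of additive characters gives
$$
J = \frac{1}{q}\sum_{\lambda\in\F_q}\ \sum_{\substack{w\in\cW,\,x\in\cX\\ y\in\cY,\,z\in\cZ}} \psi_0\big(\lambda\left(f(w+x)-y-z\right)\big),
$$
where $\psi_0$ is a fixed nontrivial additive character (with the convention $\psi_0(f(t))=0$ when $t$ is a pole of $f$, consistent with Lemma~\ref{lem:AddMult}). The $\lambda=0$ term contributes the main term $WXYZ/q$. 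For $\lambda\neq 0$ the sum factors as
$$
\frac{1}{q}\sum_{\lambda\in\F_q^*}\left(\sum_{w\in\cW}\sum_{x\in\cX}\psi_0\big(\lambda f(w+x)\big)\right)\left(\sum_{y\in\cY}\psi_0(-\lambda y)\right)\left(\sum_{z\in\cZ}\psi_0(-\lambda z)\right),
$$
so the task is to bound this by $O_k((WXYZq)^{1/2})$.

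Next I would apply the Cauchy--Schwarz inequality in $\lambda$, splitting the three factors so that the inner double sum over $\cW\times\cX$ is squared and summed over all $\lambda\in\F_q$ (dropping the restriction $\lambda\neq0$ only increases the sum), while the $\cY$- and $\cZ$-sums are handled by orthogonality. Concretely,
$$
\sum_{\lambda\in\F_q}\Big|\sum_{w\in\cW}\sum_{x\in\cX}\psi_0(\lambda f(w+x))\Big|\Big|\sum_{y\in\cY}\psi_0(-\lambda y)\Big|\Big|\sum_{z\in\cZ}\psi_0(-\lambda z)\Big|
\le q\,Z^{1/2}\left(\sum_{\lambda\in\F_q}\Big|\sum_{w,x}\psi_0(\lambda f(w+x))\Big|^2\,\frac{1}{q}\sum_{\lambda}\Big|\sum_y\psi_0(-\lambda y)\Big|^2\right)^{1/2},
$$
using $\sum_\lambda|\sum_z\psi_0(-\lambda z)|^2 = qZ$ for the factor that is not Cauchy--Schwarz'd. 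After simplification the governing quantity is $\sum_{\lambda\in\F_q}\big|\sum_{w\in\cW}\sum_{x\in\cX}\psi_0(\lambda f(w+x))\big|^2$. Expanding the square, swapping the order of summation, and applying orthogonality in $\lambda$ reduces this to $q$ times the number of solutions of $f(w_1+x_1)=f(w_2+x_2)$ with the four variables in the respective sets; but here is where I would instead directly interpret $\sum_\lambda|\,\cdot\,|^2$ via the character-sum identity and recognize that the cross terms $\lambda\neq 0$ are exactly of the shape controlled by Lemma~\ref{lem:AddMult} applied with the multiplicative character trivial: for each fixed pair we are summing $\psi_0(\lambda f(w+x))$, and Lemma~\ref{lem:AddMult} (with $\chi$ trivial, which is permitted precisely because $f$ is not of the excluded form) gives $\sum_{w\in\cW}\sum_{x\in\cX}\psi_0(\lambda f(w+x)) \ll_k (WXq)^{1/2}$ uniformly in $\lambda\in\F_q^*$.

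The cleanest route, then, is: bound the inner $\cW\times\cX$ sum by $\ll_k (WXq)^{1/2}$ for every $\lambda\neq 0$ using Lemma~\ref{lem:AddMult}, pull this out, and be left with $\frac{1}{q}(WXq)^{1/2}\sum_{\lambda\in\F_q^*}\big|\sum_y\psi_0(-\lambda y)\big|\big|\sum_z\psi_0(-\lambda z)\big|$, which by Cauchy--Schwarz and orthogonality is $\le \frac{1}{q}(WXq)^{1/2}\cdot q(YZ)^{1/2} = (WXYZq)^{1/2}\cdot$ (a constant). Combining with the main term yields $J\le WXYZ/q + O_k((WXYZq)^{1/2})$. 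The main obstacle is purely bookkeeping: making sure the uniform-in-$\lambda$ Weil-type bound from Lemma~\ref{lem:AddMult} is applicable (the hypothesis on $f$ transfers verbatim, since $\lambda f$ is not of the excluded form when $f$ is not, for $\lambda\neq 0$) and that the Cauchy--Schwarz split is arranged so that exactly two of the three factors are squared-and-summed by orthogonality while the genuinely hard factor is estimated pointwise; no delicate estimate beyond Lemma~\ref{lem:AddMult} and orthogonality is needed.
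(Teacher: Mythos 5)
Your proposal is correct and, in its final ``cleanest route'' form, is essentially identical to the paper's proof: orthogonality of additive characters to detect $f(w+x)-y-z=0$, the trivial character giving the main term $WXYZ/q$, Lemma~\ref{lem:AddMult} with trivial multiplicative character applied uniformly to the $\cW\times\cX$ factor, and Cauchy--Schwarz plus orthogonality on the remaining $\cY$- and $\cZ$-factors. The only difference is cosmetic (you parametrize $\Psi^*$ by $\lambda\in\F_q^*$ via a fixed nontrivial $\psi_0$, and the cleaner justification is simply that $\psi_0(\lambda\,\cdot)$ is itself a nontrivial additive character to which Lemma~\ref{lem:AddMult} applies directly, rather than arguing about $\lambda f$).
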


\begin{proof}
Using  the orthogonality of  additive  characters, we write
$$
J = \sum_{(w,x,y,z) \in \cW\times \cX \times \cY \times \cZ} \frac{1}{q} \sum_{\psi\in \Psi}\psi\(f(w+x)-y -z\).
$$
Rearranging the terms and separating the contribution from the trivial character, we obtain
$$
J - \frac{WXYZ}{q}  \ll  \frac{1}{q} \sum_{\psi\in \Psi^*}
\left| \sum_{(w,x) \in \cW\times \cX} 
\psi\(f(w+x)\) \right| \left| \sum_{y \in  \cY} \psi(y) \right| \left| \sum_{z \in  \cZ} \psi(z) \right|.
$$
By Lemma~\ref{lem:AddMult} with the trivial multiplicative character, we have
$$
J - \frac{WXYZ}{q}  \ll_k   \frac{\sqrt{WXq}}{q}   \sum_{\psi\in \Psi^*}
\left| \sum_{y \in  \cY} \psi(y) \right| \left| \sum_{z \in  \cZ} \psi(z) \right|.
$$
By the Cauchy-Schwarz inequality as in the proof of Lemma~\ref{lem:AddMult}
we obtain 
$$
 \sum_{\psi\in \Psi^*}
\left| \sum_{y \in  \cY} \psi(y) \right| \left| \sum_{z \in  \cZ} \psi(z) \right| \le (q^2 YZ)^{1/2}
$$
and the result follows. 
\end{proof}

We now use Lemma~\ref{lem:prodsum} to study the multiplicities of elements in the sum set of 
values of $f$. 
The names of the variables in  Lemma~\ref{lem:shkredov}
are chosen to match those in  Lemma~\ref{lemma:main}, where it is applied. 


 \begin{lemma} \label{lem:shkredov} Let $\cA, \cS, \cU \subseteq \F_q^*$.
Let   $u>0$ be such that $r_{\cS,-\cA}(x) \geq u$ for all $x \in \cU$. 
Let $k$ be a fixed positive integer and suppose also that 
$$
\tau \geq 2 \frac{kASU}{uq}.
$$
Then,  for any rational function
$f\in \F_q(X)$, of degree $k$ and  not of the form $f(X) =  g(X)^p-g(X)+\lambda X + \mu$, we have 
 $$\# \{x\in \F_q~:~r_{\cU}(f; x) \geq \tau \}  \ll_k \frac{A S U  q }{u^2\tau^2} .$$
\end{lemma}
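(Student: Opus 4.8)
The plan is to estimate the quantity $N := \#\{x \in \F_q : r_{\cU}(f;x) \geq \tau\}$ by a standard popularity/dyadic argument combined with Lemma~\ref{lem:prodsum}. First I would introduce the set $\cX = \{x \in \F_q : r_{\cU}(f;x) \geq \tau\}$, so that $N = \#\cX$, and observe that
$$
N \tau \leq \sum_{x \in \cX} r_{\cU}(f;x) = \#\{(u_1,u_2) \in \cU \times \cU : f(u_1)+f(u_2) \in \cX\}.
$$
The idea is that each element $u \in \cU$ carries, by hypothesis, at least $u$ representations of the shape $u = s - a$ with $(s,a) \in \cS \times \cA$ (this is the meaning of $r_{\cS,-\cA}(x) \geq u$ for $x \in \cU$). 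Using this on just one of the two variables, say $u_1$, gives
$$
u \cdot N\tau \;\leq\; \#\{(s,a,u_2) \in \cS\times\cA\times\cU : f(s-a) + f(u_2) \in \cX\} \;\leq\; \#\{(s,a,u_2,x) : f(s-a) = x - f(u_2),\ x\in \cX\}.
$$
Here I would absorb $f(u_2)$ into a shifted count: for fixed $u_2$, the inner count over $(s,a,x)$ is the number of solutions of $f(s-a) = y + z$ with $y \in \cX$ (or rather $x$) and $z = -f(u_2)$ a single value — but that only gives a three-variable equation, so instead I would keep the fourth variable free and apply Lemma~\ref{lem:prodsum} directly.

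Concretely, the cleaner route is: let $\cY = \cX$ and write $z$ for the variable ranging over $-f(\cU)$ (a set of size $\leq U$, but with multiplicity — I will track this by just summing over $u_2 \in \cU$). Then
$$
uN\tau \;\leq\; \sum_{u_2 \in \cU} \#\{(s,a) \in \cS\times\cA : f(s-a) + f(u_2) \in \cX\}.
$$
For each fixed $u_2$, $\cX - f(u_2)$ is a translate of $\cX$ of the same size $N$, so by Lemma~\ref{lem:prodsum} applied with $\cW = \cS$, $\cX_{\text{Lem}} = -\cA$, $\cY = \cX - f(u_2)$, $\cZ = \{0\}$... — that again collapses a variable. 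The fix: iterate the popularity bound on \emph{both} occurrences of $\cU$. Applying $r_{\cS,-\cA}(x)\geq u$ to $u_1$ and $u_2$ simultaneously gives
$$
u^2 N \tau \;\leq\; \#\{(s_1,a_1,s_2,a_2,x) \in \cS\times\cA\times\cS\times\cA\times\cX : f(s_1-a_1) + f(s_2-a_2) = x \}.
$$
Now fix $(s_2,a_2)$, set $w = s_1$, $x_{\text{Lem}} = -a_1$ ranging over $\cS \times (-\cA)$, and note the equation becomes $f(w + x_{\text{Lem}}) = x - f(s_2 - a_2)$, i.e. $f(w+x_{\text{Lem}}) = y - z$ where $y \in \cX$ and $z = f(s_2-a_2)$ is a single element. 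Summing over the $\leq AS$ choices of $(s_2,a_2)$ and bounding the number of $(w,x_{\text{Lem}},y)$ with $f(w+x_{\text{Lem}}) - y$ equal to a fixed value — this is $\#\{(w,x',y) : f(w+x') = y + c\}$, which by the $\cZ=\{c\}$ specialization... still three variables. So the genuinely correct application is to \emph{not} fix $(s_2,a_2)$ entirely but to observe
$$
u^2 N\tau \;\leq\; \#\{(s_1,a_1,s_2,a_2) : f(s_1-a_1) + f(s_2-a_2) \in \cX\} \;\leq\; \sum_{x\in\cX}\left(\sum_{(s,a)} \mathbf{1}[f(s-a) = \text{something}]\right),
$$
and then bound $\#\{(s_1,a_1,s_2,a_2,x) : f(s_1-a_1) = x - f(s_2-a_2)\}$ as a genuine four-variable count: set $\cW=\cS$, the second set $=-\cA$, $\cY = \cX$, and the fourth set $\cZ = -f(\cS - \cA)$ — but with multiplicity. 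Tracking multiplicity, $\sum$ over $\cS\times\cA$ of $\mathbf 1$ is at most (pointwise $\leq k$) times $\#(\cS-\cA)$, so replacing the $\cZ$-sum by $k$ times a set-count over $\cZ := f(\cS-\cA)$ of size $\leq AS$, Lemma~\ref{lem:prodsum} yields
$$
u^2 N\tau \;\ll_k\; \frac{S \cdot A \cdot N \cdot AS}{q} + \left(S\cdot A\cdot N\cdot AS\cdot q\right)^{1/2} \;=\; \frac{A^2S^2 N}{q} + ASN^{1/2}q^{1/2}.
$$

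\textbf{Finishing and the main obstacle.} The hypothesis $\tau \geq 2kASU/(uq)$ is exactly what lets me absorb the first (diagonal) term: since also $N \leq \#\{x : r_\cU(f;x)\geq 1\} \leq k^2 U^2/\tau \cdot(\ldots)$ — more simply, one shows $A^2S^2 N/q \leq \tfrac12 u^2 N\tau$ is \emph{not} quite what the hypothesis gives; rather the hypothesis should be used after dividing by $N$, comparing $A^2S^2/q$ against $u^2\tau$, which needs $\tau \gtrsim A^2S^2/(u^2 q)$ — so I will instead apply the popularity bound to only \emph{one} copy of $\cU$ and keep the other $u$-variable in the equation as a free $\cU$-variable, giving $uN\tau \leq \#\{(s,a,u') : f(s-a)+f(u')\in\cX\} \ll_k ASU/q \cdot N + (ASUNq)^{1/2}\cdot(\text{up to }k)$, where now the diagonal term $ASUN/q$ is dominated by $\tfrac12 uN\tau$ precisely because of the stated hypothesis $\tau \geq 2kASU/(uq)$. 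Then $uN\tau \ll_k (ASUNq)^{1/2}$, so $N^{1/2} \ll_k (ASUq)^{1/2}/(u\tau)$, i.e. $N \ll_k ASUq/(u^2\tau^2)$, which is the claim. The main obstacle — and the step requiring the most care — is the bookkeeping of multiplicities when replacing sums over $\cS\times\cA$ by set-cardinality counts to fit the clean four-variable format of Lemma~\ref{lem:prodsum}: one must verify that the pointwise bound $r_{\cS,-\cA} \leq S$ (or the degree bound $\leq k$ on fibers of $f$) is applied to the correct variable so that no spurious factor of $U$, $S$, or $A$ creeps in, and that the term coming from the trivial-character main term in Lemma~\ref{lem:prodsum} is genuinely killed by the quantitative hypothesis on $\tau$ rather than merely by an order-of-magnitude heuristic.
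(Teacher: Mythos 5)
Your final argument (after discarding the double-popularity detour) is exactly the paper's proof: bound $N\tau$ by the representation count over the popular set, insert the hypothesis $r_{\cS,-\cA}\ge u$ on one $\cU$-variable only, pass from $u'\in\cU$ to $f(u')\in f(\cU)$ at the cost of a factor $k$ from the fiber bound, apply Lemma~\ref{lem:prodsum}, and absorb the main term $kASUN/q$ into $\tfrac12 uN\tau$ via the hypothesis on $\tau$. This is correct and essentially identical to the paper's argument.
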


\begin{proof}
Define
$$
\cR = \{x\in \F_q~:~r_{\cU}(f; x) \geq \tau \} .
$$
Note that for $R =\# \cR$ we have 
$$
\tau R  \le \sum_{x \in \cR}r_{\cU}(f; x) =\# \{(x,y,z) \in  \cR \times \cU  \times \cU~:~x = f(y)+f(z) \}.
$$
On the other hand, since $r_{\cS,-\cA}(z)  \geq u$ for $z\in \cU$,  we have
\begin{align*}
&\# \{(x,y,z) \in  \cR \times \cU  \times \cU~:~x = f(y)+f(z)  \}\\
& \qquad  \le u^{-1}  \# \{(v,w, x,y) \in  \cS \times \cA \times \cR \times \cU~:~x= f(y)+f(v-w)\}.
\end{align*}
Therefore, 
\begin{align*}
\tau u R  &\le  \# \{(v,w, x,y) \in  \cS \times \cA \times \cR \times \cU~:~x= f(y)+f(v-w)\}
\\ & \leq k \cdot \# \{(v,w, x,\widetilde y) \in  \cS \times \cA \times \cR \times f(\cU)~:~x= \widetilde y+f(v-w)\}.
\end{align*}
Applying Lemma~\ref{lem:prodsum}, 
it follows that
$$ \tau u R \le \frac{kARSU}{q}  + O_k((ARSUq)^{1/2}).$$
The assumed lower bound on $\tau$ then implies that
$$\tau u R \ll_k (ARSUq)^{1/2}$$
and the result follows.
\end{proof}

We need the following result~\cite[Lemma~17]{RSS}. We include a short proof for the convenience of the reader, which 
can be easily extended to finite sets in any group.

\begin{lemma} \label{sumdecomp}  Let $\cA_1,\ldots,\cA_n \subseteq \F_q$. Then
$$ \rE \( \bigcup_{i=1}^n \cA_i \) \leq \( \sum_{i=1}^n\rE^{1/4}(\cA_i)\)^4.$$
\end{lemma}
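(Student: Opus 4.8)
The plan is to prove the bound
$$\rE\(\bigcup_{i=1}^n \cA_i\) \le \(\sum_{i=1}^n \rE^{1/4}(\cA_i)\)^4$$
by passing to the Fourier side, where additive energy becomes the $L^4$-norm of the indicator's Fourier transform, and then applying the triangle inequality for the $L^4$-norm. Concretely, for a set $\cB \subseteq \F_q$ write $\widehat{\mathbf 1_\cB}(\xi) = \sum_{b \in \cB}\psi(\xi b)$ for a fixed nontrivial additive character $\psi$ (or work on the dual group). By orthogonality of additive characters one has the standard identity
$$\rE(\cB) = \frac{1}{q}\sum_{\xi \in \F_q} |\widehat{\mathbf 1_\cB}(\xi)|^4,$$
so that $\rE(\cB)^{1/4} = q^{-1/4}\,\bigl\|\widehat{\mathbf 1_\cB}\bigr\|_4$, where the $L^4$-norm is taken with counting measure on $\F_q$.

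The first step is to record this Fourier identity for additive energy; it follows by expanding $\sum_\xi |\widehat{\mathbf 1_\cB}(\xi)|^4$ as a sum over quadruples and using $\sum_{\xi} \psi(\xi t) = q\cdot\mathbf 1_{t=0}$. The second step is the observation that $\mathbf 1_{\bigcup_i \cA_i} \le \sum_i \mathbf 1_{\cA_i}$ pointwise (with equality when the $\cA_i$ are disjoint, but the inequality is all we need), hence $\bigl|\widehat{\mathbf 1_{\cup \cA_i}}(\xi)\bigr| \le \sum_i \bigl|\widehat{\mathbf 1_{\cA_i}}(\xi)\bigr|$ for every $\xi$ — here one should be slightly careful: this pointwise bound on the Fourier coefficients is not literally true for arbitrary overlapping sets (cancellation could make the left side smaller, which is fine, but could the union's coefficient exceed the sum? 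No: $\widehat{\mathbf 1_{\cup\cA_i}}(\xi) = \sum_{x \in \cup \cA_i}\psi(\xi x)$ and this is a sub-sum in absolute value terms only when the sets are disjoint). The clean route that avoids this issue is to first reduce to the disjoint case: replace $\cA_i$ by $\cA_i' = \cA_i \setminus (\cA_1 \cup \dots \cup \cA_{i-1})$, so that $\bigcup \cA_i' = \bigcup \cA_i$ with the $\cA_i'$ pairwise disjoint and $\cA_i' \subseteq \cA_i$, whence $\rE(\cA_i') \le \rE(\cA_i)$ since energy is monotone under inclusion. For disjoint sets $\widehat{\mathbf 1_{\cup \cA_i'}} = \sum_i \widehat{\mathbf 1_{\cA_i'}}$ exactly.

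The third step is then just the triangle inequality for $\|\cdot\|_4$:
$$\Bigl\|\widehat{\mathbf 1_{\cup \cA_i'}}\Bigr\|_4 = \Bigl\|\sum_i \widehat{\mathbf 1_{\cA_i'}}\Bigr\|_4 \le \sum_i \Bigl\|\widehat{\mathbf 1_{\cA_i'}}\Bigr\|_4.$$
Raising to the fourth power, dividing by $q$, and translating back via the energy identity gives
$$\rE\Bigl(\bigcup_i \cA_i\Bigr) = \frac1q\Bigl\|\widehat{\mathbf 1_{\cup\cA_i'}}\Bigr\|_4^4 \le \frac1q\Bigl(\sum_i \rE(\cA_i')^{1/4} q^{1/4}\Bigr)^4 = \Bigl(\sum_i \rE(\cA_i')^{1/4}\Bigr)^4 \le \Bigl(\sum_i \rE(\cA_i)^{1/4}\Bigr)^4,$$
which is the claim. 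I do not expect any serious obstacle here; the only point requiring a moment's care is the one flagged above — that the subadditivity of Fourier coefficients in absolute value needs the sets to be disjoint, which is why the preliminary disjointification step (using monotonicity of $\rE$) is included. The remark in the statement that the argument extends to finite subsets of an arbitrary group is immediate: the same proof works verbatim with characters replaced by irreducible unitary representations and $\|\cdot\|_4$ interpreted via the Hausdorff–Young/Plancherel framework, or — more elementarily — one can give a purely combinatorial version of the triangle inequality for the "energy norm" $\rE^{1/4}$ using the Gowers-type inner product $\langle f_1,f_2,f_3,f_4\rangle = \sum_{x+y=z+w} f_1(x)f_2(y)\overline{f_3(z)f_4(w)}$ and Cauchy–Schwarz, but the Fourier proof over $\F_q$ is the shortest.
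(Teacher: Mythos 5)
Your proof is correct, but it takes a different route from the paper. The paper's proof is purely combinatorial: after the same reduction to disjoint sets, it expands $\rE\(\bigcup_i \cA_i\)$ as $\sum_{i,j,k,\ell}\sum_x r_{\cA_i,\cA_j}(x)r_{\cA_k,\cA_\ell}(x)$ and applies the Cauchy--Schwarz inequality twice to the representation functions, first to decouple the pair $(i,j)$ from $(k,\ell)$ and then to decouple $i$ from $j$ via $\sum_x r_{\cA_i,\cA_j}(x)^2=\sum_x r_{\cA_i,-\cA_i}(x)r_{\cA_j,-\cA_j}(x)$. This is exactly the ``Gowers-type inner product plus Cauchy--Schwarz'' alternative you sketch in your last sentence, and it is the reason the paper can remark that the argument extends verbatim to finite subsets of an arbitrary group, with no abelian structure or character theory needed. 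Your Fourier-analytic version is arguably more conceptual --- the identity $\rE(\cB)=q^{-1}\sum_\xi|\widehat{\mathbf 1_\cB}(\xi)|^4$ exhibits $\rE^{1/4}$ as an $L^4$-norm, so subadditivity is literally the triangle inequality --- and you correctly spotted the one genuine pitfall, namely that $|\widehat{\mathbf 1_{\cup\cA_i}}|\le\sum_i|\widehat{\mathbf 1_{\cA_i}}|$ can fail for overlapping sets, and repaired it with the disjointification $\cA_i'=\cA_i\setminus(\cA_1\cup\dots\cup\cA_{i-1})$ together with monotonicity of energy under inclusion (the same reduction the paper makes implicitly when it says one may assume disjointness). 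The trade-off is that your route is tied to the abelian Fourier transform, whereas the paper's double Cauchy--Schwarz gives the general-group statement directly; both yield the identical bound.
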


\begin{proof} We can assume that the sets $\cA_1,\ldots,\cA_n$ are disjoint. 
Then we have
 \begin{align*}
 \rE\(\bigcup_{i=1}^n\cA_i\)
&=\sum_{i,j,k,\ell=1}^n \sum_{x\in \F_q} r_{\cA_i,\cA_j}(x) r_{\cA_k,\cA_\ell}(x)\\
&\le \sum_{i,j,k,\ell=1}^n\(\sum_{x\in \F_q} r_{\cA_i,\cA_j}(x)^2\)^{1/2}
\(\sum_{x\in \F_q}r_{\cA_k,\cA_\ell}(x)^2\)^{1/2}\\
&= \(\sum_{i,j=1}^n\(\sum_{x\in \F_q} r_{\cA_i,\cA_j}(x)^2\)^{1/2}\)^2\\
&= \(\sum_{i,j=1}^n\(\sum_{x\in \F_q} r_{\cA_i,-\cA_i}(x)r_{\cA_j,-\cA_j}(x)\)^{1/2}\)^2\\
&\le  \(\sum_{i,j=1}^n\(\sum_{x\in \F_q} r_{\cA_i,-\cA_i}(x)^2\right)^{1/4} \left(\sum_{x\in\F_q}r_{\cA_j,-\cA_j}(x)^2\)^{1/4}\)^2\\
&=  \(\sum_{i=1}^n\(\sum_{x\in \F_q} r_{\cA_i,-\cA_i}(x)^2\)^{1/4}\)^4
=  \(\sum_{i=1}^n \rE(A_i)^{1/4}\)^4
\end{align*}
via two consecutive  applications of the Cauchy-Schwarz inequality.
\end{proof}

We now formulate and prove our main technical tool. 

\begin{lemma} \label{lemma:main} Let $\cA \subseteq \F_q$.
Then  for any rational function
$f\in \F_q(X)$ of degree $k$ and not of the form $f(X) =  g(X)^p-g(X)+\lambda X + \mu$,  
there exists $\cU \subseteq  \cA$
of cardinality $U$ such that 
$$U \gg \frac{\rE^{1/2}(\cA)}{A^{1/2} (\log A)^{7/4}}$$ 
and
$$
\rE(f(\cU)) \ll_k   \frac{A U^6q^{-1}(\log A)^{11/2} + A U^3 q(\log A)^{6}}{\rE(\cA)}.
$$
\end{lemma}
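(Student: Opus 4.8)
The plan is to extract from $\cA$, by two successive dyadic pigeonholings on its additive energy, a subset $\cU\subseteq\cA$ together with auxiliary data $(\cS,u)$ to which Lemma~\ref{lem:shkredov} applies, and then to estimate $\rE(f(\cU))$ scale by scale. First I would fix a ``popular sumset'': writing $\rE(\cA)=\sum_{x}r_{\cA+\cA}(x)^2$ and using $r_{\cA+\cA}(x)\le A$, a dyadic decomposition produces a real $\Delta\ge1$ such that the level set $\cS:=\{x:\ \Delta\le r_{\cA+\cA}(x)<2\Delta\}$, of size $S=\#\cS$, satisfies $S\Delta^2\gg\rE(\cA)/\log A$; in particular $S\le\rE(\cA)/\Delta^2$, $S\Delta\le A^2$, and $\sum_{x\in\cA}r_{\cS,-\cA}(x)=\sum_{s\in\cS}r_{\cA+\cA}(s)\asymp S\Delta$. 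Then I would apply a second dyadic pigeonholing to the function $x\mapsto r_{\cS,-\cA}(x)$ on $\cA$, obtaining a threshold $u\ge1$ for which $\cU:=\{x\in\cA:\ r_{\cS,-\cA}(x)\ge u\}$, of size $U$, satisfies $Uu\gg S\Delta/\log A$. By construction $\cU\subseteq\cA$ and $r_{\cS,-\cA}(x)\ge u$ for every $x\in\cU$, which is precisely the hypothesis of Lemma~\ref{lem:shkredov} once $0$ and the (at most $k$) poles of $f$ are discarded from the sets involved, and this is harmless. Combining the two pigeonholes gives $Uu\Delta(\log A)^{2}\gg\rE(\cA)$.

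Next I would bound $\rE(f(\cU))\le\sum_x r_\cU(f;x)^2$, where $r_\cU(f;x)$ counts the pairs $(v,w)\in\cU^2$ with $f(v)+f(w)=x$; here $\sum_x r_\cU(f;x)\le U^2$ and, since $\deg f=k$, $r_\cU(f;x)\le kU$ for all $x$. Put $\tau_0:=2kASU/(uq)$, the threshold in Lemma~\ref{lem:shkredov}. The part of $\sum_x r_\cU(f;x)^2$ coming from $x$ with $r_\cU(f;x)<\tau_0$ is at most $\tau_0\sum_x r_\cU(f;x)\le\tau_0U^2$; the part coming from $x$ with $r_\cU(f;x)\ge\tau_0$ I would bound dyadically, since over each scale $\tau\in[\tau_0,kU]$ Lemma~\ref{lem:shkredov} applies (exactly because $\tau\ge\tau_0$) and gives $\#\{x:r_\cU(f;x)\ge\tau\}\ll_k ASUq/(u^2\tau^2)$, so the $O(\log A)$ scales contribute $\ll_k ASUq(\log A)/u^2$ in total. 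This yields
\[
\rE(f(\cU))\ll_k\frac{ASU^3}{uq}+\frac{ASUq\log A}{u^2}.
\]

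It then remains to insert the relations from the two pigeonholes. Using $S\le\rE(\cA)/\Delta^2$ together with $\rE(\cA)\ll Uu\Delta(\log A)^2$, one checks that the first term above is $\ll_k AU^6q^{-1}(\log A)^{11/2}/\rE(\cA)$ as soon as $u\ll U(\log A)^{3/2}$, say; that the second term is then $\ll_k AU^3q(\log A)^6/\rE(\cA)$ automatically; and that in this same regime $U\gg\rE^{1/2}(\cA)/(A^{1/2}(\log A)^{7/4})$, where one additionally uses $\Delta\le A$. If the threshold furnished by the second pigeonholing is larger than $U(\log A)^{3/2}$, I would lower it --- which can only enlarge $\cU$ --- until $u\ll U(\log A)^{O(1)}$ holds, keeping track that $Uu\gg S\Delta/\log A$ is not destroyed; depending on the relative sizes of $S$, $A$ and $\Delta$ this readjustment may be cleanest to present as a short case distinction.

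I expect this last step to be the main obstacle: one must balance $ASU^3/(uq)$ and $ASUq\log A/u^2$ against each other and against the two target terms simultaneously, and it is precisely this balancing --- together with the number of dyadic scales used in the two pigeonholes and in the level decomposition of $r_\cU(f;\cdot)$ --- that forces the exponents $11/2$, $6$ and $7/4$ of $\log A$ in the statement. Everything else should be routine: the sole appeal to the hypothesis on $f$ and the sole use of the Weil bound enter through Lemmas~\ref{lem:prodsum} and~\ref{lem:shkredov}, and the remainder is Cauchy--Schwarz and pigeonholing.
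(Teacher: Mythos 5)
Your outline reproduces the paper's argument almost step for step --- the first dyadic pigeonholing on $\rE(\cA)$ to get $\cS$ with $S\rho^2\gg\rE(\cA)/\log A$, the second pigeonholing on $x\mapsto r_{\cS,-\cA}(x)$ over $\cA$, the split of $\sum_x r_{\cU}(f;x)^2$ into the sub-threshold range and $O(\log A)$ dyadic scales handled by Lemma~\ref{lem:shkredov}, and the final bookkeeping are all exactly as in the paper, and you have correctly isolated the one genuine difficulty: everything hinges on also securing $u\ll U(\log A)^{1/2}$ (you ask for $(\log A)^{3/2}$, which would already cost you extra logarithms in the stated exponents). The problem is that your proposed repair of this step does not work. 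Lowering the threshold from $u_0$ to $u'<u_0$ enlarges the superlevel set $N(u')=\#\{x\in\cA:r_{\cS,-\cA}(x)\ge u'\}$ only if the counting function actually grows, and nothing forces it to: $r_{\cS,-\cA}$ restricted to $\cA$ may be supported on a set of size $N_0$ on which it takes values $\asymp u_0\gg N_0(\log A)^{1/2}$, in which case $N(u')=N_0$ for every $u'\le u_0$ and the product $u'N(u')$ simply decays linearly as you lower $u'$. You therefore cannot simultaneously keep $Uu\gg S\rho/\log A$ and achieve $u\ll U(\log A)^{O(1)}$ by adjusting the threshold on the \emph{same} counting function, and without $u\ll U(\log A)^{1/2}$ both the first term of your energy bound and the lower bound $U\gg\rE(\cA)^{1/2}/(A^{1/2}(\log A)^{7/4})$ fall through.

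The paper's resolution of this case is a genuinely different idea which your sketch is missing: a transposition of the incidence structure. When the level set $\cV$ (your $\cU$ from the second pigeonholing) satisfies $V<s(\log A)^{-1/2}$, one restricts the point set $\cP=\{(a,b)\in\cA\times\cA:a+b\in\cS\}$ to $\cQ=\cP\cap(\cV\times\cA)$, which still has $\ge Vs$ points, and pigeonholes on the \emph{second} coordinate to get a set $\cW$ of rows of common richness $\approx t$ with $Wt\gg Vs/\log A$. The point is that every row of $\cQ$ trivially has at most $V$ points, so $t\le V$, and then $WV\ge Wt\gg Vs/\log A>V^2(\log A)^{-1/2}$ forces $W\gg V(\log A)^{-1/2}\ge t(\log A)^{-1/2}$; one also still has $r_{\cS,-\cA}(y)\ge t$ for $y\in\cW$. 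Taking $(\cU,u)=(\cW,t)$ in this case gives both $uU\gg\rho S/(\log A)^2$ and $U\gg u(\log A)^{-1/2}$, which is exactly what your computation needs. With that case distinction inserted in place of the threshold-lowering, the rest of your argument goes through as written.
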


\begin{proof}

The additive energy of a set $\cA \subseteq \F_q$ can be written in the form
$$\rE(\cA)=\sum_{x \in \cA+\cA} r_{\cA,\cA}^2(x).$$
Dyadically decompose this sum, and deduce that there is a popular dyadic set
$$\cS=\{x \in \cA+\cA~:~\rho \leq r_{\cA,\cA}(x)<2\rho \}$$
with some integer $1\le \rho \le |A|$ where $\rho$ is a power of $2$, and such that 
\begin{equation}
\label{diadenergy}
\rho^2 \# \cS \gg \frac{\rE(\cA)} {\log A}.
\end{equation} 
Consider the point set
$$\cP=\{(a,b) \in \cA \times \cA ~:~a+b \in \cS \}.$$
Following our standard convention, we denote
$$
P = \# \cP \mand S = \# \cS
$$
and note that 
\begin{equation}
\label{eq:SPS}
\rho S \le  P < 2\rho S.
\end{equation}  
We then make a second dyadic decomposition of this point set to find 
a large subset supported on vertical lines with approximately the same richness.

To be precise, for any $x \in  \F_q$, define 
$$
\cA_x=\{y~:~(x,y) \in \cP \} \mand A_x = \# \cA_x.
$$
Note that
$$
\sum_{x \in \cA} A_x =P.
$$
Therefore, for some $s$ there exists a dyadic set
$$\cV =\{x \in \cA~:~s \leq A_x<2s \}
$$
such that, recalling~\eqref{eq:SPS}, for  $V = \# \cV$ we have
\begin{equation}
\label{eq:VsS}
V s \gg \frac{P}{\log A}  \gg \frac{\rho  S  }{ \log A}.
\end{equation}

We now separate into two cases 
$$
V  \geq   \frac{s}{(\log A)^{1/2}} \mand V < \frac{s}{(\log A)^{1/2}}.
$$
 
\subsubsection*{Case~I: $V  \geq s(\log A)^{-1/2}$}  
Note for any  $x \in \cV$,   there exist 
$$
y_1,y_2, \ldots,y_s \in \cA_x \subseteq \cA
$$ 
such that $(x,y_i) \in \cP$ for all $1 \leq i \leq s$. Therefore
$$x+y_1,x+y_2,\ldots,x+y_s \in \cS.$$
It follows that $r_{\cS,-\cA}(x) \geq s$ for every $x \in \cV$ and 
in this case we define 
\begin{equation}
\label{eq:Case1}
\cU = \cV \mand u=s.
\end{equation}

\subsubsection*{Case~II: $V  <s(\log A)^{-1/2}$} 
In this case, consider the point set 
$$\cQ=\{(x,y) \in \cP~:~x \in \cV \} 
$$
of cardinality $Q = \# \cQ$. 
As before, we note that  for any  $x \in \cV$,   there exist at least $s$ values of $y \in  \cA_x \subseteq \cA$
with $(x,y)\in \cP$. Hence  $Q \geq V s$.

 Now, for any $y \in  \F_q$, define 
$$
\cB_y=\{x ~:~(x,y) \in \cQ \} \mand B_y = \# \cB_y.
$$
Note that
$$\sum_{y \in \cA} B_y =Q.$$
Therefore, for some $t$ there exists a dyadic set
$$\cW=\{y \in \cA ~: ~t \leq B_y<2t \}$$
such that for $W = \# \cW$ we have
\begin{equation}
\label{eq:WVs}
Wt   \gg \frac{Q}{\log A} \geq  \frac{V s}{\log A}.
\end{equation}
Note  that since $ \cQ \subseteq \cV \times \cA$ we also have $t \leq V$.
 It follows from~\eqref{eq:WVs} and the assumption that $s > V(\log A)^{1/2}$ that
$$WV \geq  Wt \gg     \frac{V s}{\log A} >  \frac{V^2}{(\log A)^{1/2}}$$
and thus
\begin{equation}
\label{eq:W large}
W \gg   V (\log A)^{-1/2} \geq t (\log A)^{-1/2} .
\end{equation}
Also, by~\eqref{eq:WVs} and~\eqref{eq:VsS},
\begin{equation}
\label{eq:Wt large}
Wt   \gg   \frac{V s}{\log A} \gg  \frac{\rho S}{(\log A)^2}.
\end{equation}

Now, let $y \in \cW$. So, there exist $x_1,x_2, \ldots,x_{t} \in \cA$ such that $(x_i,y) \in \cP$ for all $1 \leq i \leq t$. Therefore
$$x_1+y,x_2+y,\ldots,x_{t}+y \in \cS.$$
It follows that $r_{\cS,-\cA}(y) \geq t$ for every $y \in \cW$.

In this case we take  
\begin{equation}
\label{eq:Case2}
\cU = \cW \mand u=t.
\end{equation}

One now verifies that for both choices~\eqref{eq:Case1} and~\eqref{eq:Case2} 
we have $\cU \subseteq A$ of cardinality $U$ with 
\begin{equation}
\label{eq2}
U \gg  u (\log A)^{-1/2}
\end{equation}
and
\begin{equation}
\label{eq3}
 u U  \gg \frac{\rho  S  }{(\log A)^2}
\end{equation}
and such that
$$r_{\cS,-\cA}(x) \geq u, \qquad \forall x \in \cU.$$
Indeed in Case~I, the inequality~\eqref{eq2} is by the assumption, while in  
Case~II this follows from~\eqref{eq:W large}.
Furthermore, in Case~I, the inequality~\eqref{eq3} is weaker than~\eqref{eq:VsS}, while in  
Case~II this follows from~\eqref{eq:Wt large}.

Note also that, multiplying~\eqref{eq2} and~\eqref{eq3} and then using~\eqref{diadenergy} 
together with the fact that $\rho \leq A$, we obtain
\begin{equation}
\label{eq4}
U^2    \gg \frac{\rho S } {(\log A)^{5/2}} = \frac{\rho^2  S } {\rho(\log A)^{5/2}} \gg  \frac{\rE(\cA) }{  A(\log A)^{7/2}}.
\end{equation}
This $\cU$ is the desired set. It remains to estimate  the energy $\rE(f(\cU))$ of $f(\cU)$.

We have 
\begin{equation}
\label{eq:E and r2}
\rE(f(\cU))= \sum_{x \in \F_q} r_{f(\cU)}(x)^2 \le 
 \sum_{x \in \F_q} r_{\cU}(f;x)^2. 
\end{equation}
Define the set  
$$
\cR_0=\left\{x \in \F_q~:~r_{\cU}(f;x) \leq 2 \frac{k A S U}{uq}\right\}
$$
and then for $J = \rf{\log A/\log 2}$, the sets
$$
\cR_j=\left\{x \in \F_q~:~ 2^{j} \frac{k A S U}{uq} < r_{\cU}(f;x) \le 2^{j+1} \frac{k A S U}{uq}\right\}, 
\quad j = 1, \ldots, J.
$$
Since
$$
\sum_{x \in \F_q} r_{\cU}(f;x) = U^2
$$
the contribution from $x \in \cR_0$ is 
bounded by 
\begin{equation}
\label{eq:R0}
 \sum_{x \in \cR_0} r_{\cU}(f;x)^2 \le  
  2 \frac{k A S U}{uq}\sum_{x \in \F_q} r_{\cU}(f;x)  \ll  \frac{k A S U^3}{uq}.
\end{equation}
For $j = 1, \ldots, J$, we apply Lemma~\ref{lem:shkredov} 
with 
$$
\tau = 2^{j} \frac{k A S U}{uq} 
$$
 to derive
\begin{equation}
\label{eq:Rj}
 \sum_{x \in \cR_j} r_{\cU}(f;x)^2\le (2\tau)^2 \# \cR_j \ll_k
  \frac{A S  U  q }{u^2}.
\end{equation}

Substituting the bounds~\eqref{eq:R0} and~\eqref{eq:Rj} in~\eqref{eq:E and r2} and 
using the fact that $J 
\ll \log A$, we obtain
\begin{equation}
\label{eq:E 2Term}
\rE(f(\cU))
\ll_k   \frac{A S U^3}{uq} +  \frac{A S  U  q }{u^2}\log A. 
\end{equation}

We now deal with these two terms in~\eqref{eq:E 2Term} one at a time. 

Firstly, multiplying~\eqref{eq3} with the first inequality  in~\eqref{eq4}, and then recalling~\eqref{diadenergy},
we obtain 
$$
uU^3 \gg \frac{\rho^2S^2}{(\log A)^{9/2}}  \gg  \frac{S\rE(\cA)}{(\log A)^{11/2}} 
$$
which we rewrite as 
$$
  \frac{S}{u} \ll  \frac{U^3 (\log A)^{11/2}}{\rE(\cA)}.
$$
Hence, for the first term in~\eqref{eq:E 2Term} we have
\begin{equation}
\label{eq:Term1}
 \frac{A S U^3}{uq} \ll   \frac{A U^6(\log A)^{11/2}}{\rE(\cA)q}.
\end{equation}

For the second term, squaring~\eqref{eq3} and then using~\eqref{diadenergy} again, we obtain
$$
u^2U^2 \gg  \frac{\rho^2S^2}{(\log A)^{4}}  \gg  \frac{S\rE(\cA)}{(\log A)^{5}} 
$$
which we rewrite as 
$$
  \frac{S}{u^2} \ll  \frac{U^2 (\log A)^{5}}{\rE(\cA)}.
$$
Hence, for the second term in~\eqref{eq:E 2Term} we have
\begin{equation}
\label{eq:Term2}
 \frac{A S  U  q }{u^2}\log A\ll   \frac{A U^3 q(\log A)^{6}}{\rE(\cA)}.
\end{equation}

Substituting~\eqref{eq:Term1} and~\eqref{eq:Term2} in~\eqref{eq:E 2Term}, 
we conclude the proof. 
\end{proof}

\section{Proof of Theorem~\ref{thm:decomp}}

\subsection{Partition procedure}
Below, we use  Lemma~\ref{lemma:main} to construct a nested sequence of sets
$$\emptyset=\cU_1 \subseteq \cU_2 \subseteq \ldots \subseteq \cU_m$$
and a corresponding sequence
$$\cV_m \subseteq \cV_{m-1} \subseteq \ldots \subseteq \cV_1=\cA$$
where $\cU_i  \sqcup \cV_i=\cA$, $i =1, \ldots, m$. 

When 
\begin{equation}
\label{eq:TermCond}
\rE(\cV_m) \leq A^3 / \rM(A)
\end{equation}
we terminate this process and take $\cS=\cV_m$ and $\cT=\cU_m$ (as we explain below,~\eqref{eq:TermCond}
is satisfied for some $m$).

The nested sequence is defined as follows. Suppose that 
\begin{equation}
\label{eq:NoTermCond}
\rE(\cV_i) > A^3/\rM(A).
\end{equation}
 Then, by Lemma~\ref{lemma:main}, there exists $\cQ_{i} \subseteq \cV_i$ such that for $Q_i = \# \cQ_i$
\begin{equation}
\label{eq:fact1}
Q_i \gg \frac{\rE^{1/2}(\cV_i)}{A^{1/2}\log^{7/4} A} > \frac{A} { \rM(A)^{1/2} \log^{7/4} A}
\end{equation}
and using~\eqref{eq:NoTermCond} we derive
\begin{eqnarray}\label{EQi} \rE(f(\cQ_i)) &
\ll_k & \frac{V_i Q_i^6q^{-1}(\log A)^{11/2} + V_i Q_i^3 q(\log A)^{6}}{\rE(\cV_i)}
\\\nonumber & < &\frac{\rM(A)}{A^3} \left ( \frac{V_i Q_i^6 (\log A)^{11/2}}{q}+V_i Q_i^3q (\log A)^6 \right).
\end{eqnarray}
In particular, inequality~\eqref{eq:fact1} implies that
\begin{equation}
\label{eq:fact2}
V_i \ll Q_i \rM(A)^{1/2} \log^{7/4} A.
\end{equation}
Then, define $\cV_{i+1}=\cV_i \setminus \cQ_i$. This automatically defines $\cU_{i+1}=\cU_i \sqcup \cQ_i$. This iterative construction in fact gives 
\begin{equation}
\label{eq:U and Q}
\cU_{i+1}= \bigsqcup_{j=1}^{i} \cQ_j.
\end{equation}
Note that this process  certainly terminates, since we have a uniform lower bound on the cardinality of $Q_i$ and so the cardinality 
$V_i$  is monotonically decreasing, thus we eventually reach the termination 
condition~\eqref{eq:TermCond}.

\subsection{Final estimate}
Recall that by~\eqref{eq:U and Q} we have
$$
\cT = \cU_m =  \bigsqcup_{j=1}^{m-1} \cQ_j.
$$
Therefore, Lemma~\ref{sumdecomp} and the inequality~\eqref{EQi} imply  that 
\begin{align*}
& \rE^{1/4}(f(\cT))=\left (\rE \left (\bigcup_{i=1}^{m-1} f(\cQ_i) \right )\right )^{1/4} \leq \sum_{i=1}^{m-1}\rE^{1/4}(f(\cQ_i))
\\& \qquad  \ll_k \sum_{i=1}^{m-1}\left ( \frac{\rM(A) }{A^3} \left ( \frac{V_i Q_i^6(\log A)^{11/2}}{q}+V_iQ_i^3q (\log A)^6 \right) \right )^{1/4}.
\end{align*}
Now the inequality~\eqref{eq:fact2} yields
\begin{align*}
& \rE^{1/4}(f(\cT))\\& \quad  \ll_k  
\sum_{i=1}^{m-1}\left ( \frac{\rM(A) }{A^3} \left ( \frac{V_i Q_i^6(\log A)^{11/2}}{q}+\rM(A)^{1/2}Q_i^4q (\log A)^{31/4} \right) \right )^{1/4}.
\end{align*}

Using that $A \ge V_i \ge Q_i$ and thus replacing $V_i Q_i^6$ with $A^3  Q_i^4$  in the first 
term, we now obtain
\begin{align*}
& \rE^{1/4}(f(\cT)) \\
& \qquad  \ll_k  \( \frac{\rM(A) (\log A)^{11/2}}{q} +A^{-3} \rM(A)^{3/2}q (\log A)^{31/4} \) ^{1/4}.
\sum_{i=1}^{m-1}Q_i. 
\end{align*}
Using that 
$$
\sum_{i=1}^{m-1}Q_i \le A
$$
we obtain 
\begin{equation}
\label{eq:bound 2}
\rE(f(\cT)) \ll_k  \frac{\rM(A) A^4 (\log A)^{11/2}}{q} + \rM(A)^{3/2} A q (\log A)^{31/4}.
\end{equation}
We now see that the choice~\eqref{eq:MZ} balances between~\eqref{eq:TermCond} and~\eqref{eq:bound 2}
and leads to the inequality 
$$
\rE(f(\cT))   \ll_k A^3 / \rM(A)
$$
implied by our choice $\cS = \cV_m$ and the bound~\eqref{eq:TermCond}.
This completes the proof.~\hfill $\Box$

\section{Proofs of Theorems~\ref{thm:S or T}, \ref{thm:AnotB}, \ref{thm:Mixed} and~\ref{thm:Kloost}}
\label{sec:char sum}

\subsection{Character sums and energy}

First we give some basic bounds of the sums $\SABC$ and $\TABC$ in terms of energies of various sets. 

\begin{lemma}
\label{lem:AddCharEnergy} For any sets $\cA,\cB, \cC \subseteq  \F_q$   and additive 
character  $ \psi\ \in \Psi^* $ we
 have
$$
\SABC \le   A^{1/2} \rE(\cB)^{1/4} \rE(\cC)^{1/4} q^{1/2}.
$$
\end{lemma}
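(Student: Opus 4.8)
The plan is to exploit the identity $\ABC = a(b+c)+bc$, which decouples the variable $a$ from the pair $(b,c)$. Writing $T(s)=\sum_{a\in\cA}\psi(as)$ and $W(s)=\sum_{b\in\cB,\,c\in\cC,\ b+c=s}\psi(bc)$ (so that $W$ is supported on $\cB+\cC$), we have $\SABC=\sum_{s\in\F_q}W(s)T(s)$, and hence by the Cauchy--Schwarz inequality
$$
|\SABC|^2\le\Bigl(\sum_{s\in\F_q}|W(s)|^2\Bigr)\Bigl(\sum_{s\in\F_q}|T(s)|^2\Bigr).
$$
The second factor equals $qA$ by orthogonality of additive characters.

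For the first factor I would expand the square and discard the phases, each of modulus $1$:
$$
\sum_{s\in\F_q}|W(s)|^2=\sum_{\substack{b_1,b_2\in\cB,\ c_1,c_2\in\cC\\ b_1+c_1=b_2+c_2}}\psi(b_1c_1-b_2c_2)\le\#\{(b_1,b_2,c_1,c_2)\in\cB^2\times\cC^2:\ b_1+c_1=b_2+c_2\}.
$$
Rewriting the constraint as $b_1-b_2=c_2-c_1$ and summing over the common value $d$ of this difference, the count on the right equals $\sum_d r_{\cB,-\cB}(d)\,r_{\cC,-\cC}(d)$; a further application of Cauchy--Schwarz together with the standard identities $\rE(\cB)=\sum_d r_{\cB,-\cB}(d)^2$ and $\rE(\cC)=\sum_d r_{\cC,-\cC}(d)^2$ then gives $\sum_s|W(s)|^2\le\rE(\cB)^{1/2}\rE(\cC)^{1/2}$. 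Substituting both bounds into the displayed inequality yields $|\SABC|\le A^{1/2}q^{1/2}\rE(\cB)^{1/4}\rE(\cC)^{1/4}$, in fact with an absolute constant equal to $1$.

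I do not expect any genuine obstacle here: every step is a single application of Cauchy--Schwarz or of character orthogonality. The only point requiring a moment's thought is that the factor $\psi(bc)$ in $W(s)$ may be discarded outright at no cost, converting $\sum_s|W(s)|^2$ into the additive-energy-type quantity $\sum_s r_{\cB,\cC}(s)^2$; the identity $\ABC=a(b+c)+bc$ is precisely what makes this separation of variables possible.
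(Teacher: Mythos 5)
Your proof is correct and follows essentially the same route as the paper: one application of Cauchy--Schwarz to decouple $a$ from $(b,c)$, orthogonality of additive characters in the $a$-variable (your Parseval computation of $\sum_s|W(s)|^2$ is exactly the paper's step of extending the $a$-sum to all of $\F_q$ and expanding the square), reduction to the mixed energy $\rE(\cB,\cC)$, and the final Cauchy--Schwarz bound $\rE(\cB,\cC)\le \rE(\cB)^{1/2}\rE(\cC)^{1/2}$. The resulting constant and exponent are identical, so nothing further is needed.
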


\begin{proof} By the Cauchy-Schwarz inequality we have
\begin{align*}
|\SABC|^2 & \le  A \sum_{a\in \cA} \left|\sum_{b \in \cB} \sum_{c\in \cC}  \psi(\ABC)\right|^2\\
& \le  A \sum_{a\in \F_q} \left|\sum_{b \in \cB} \sum_{c\in \cC}   \psi(\ABC)\right|^2\\
&= A \sum_{b_1, b_2 \in \cB} \sum_{c_1, c_2\in \cC}   \psi\(b_1c_1- b_2c_2\) \\
& \qquad \qquad \qquad \qquad \sum_{a\in \F_q} \psi\(a\(b_1+c_1-b_2-c_2\)\).
\end{align*}
By the orthogonality of additive characters 
we obtain 
$$
\SABC \le  \sqrt{A\rE(\cB,\cC)q}, 
$$
where 
$$
\rE(\cB,\cC) = \#\{(b_1, b_2,c_1, c_2)\in\cB \times \cB \times \cC \times\cC ~:~b_1+c_1=b_2+c_2\}.
$$

Finally, it follows from the Cauchy-Schwarz inequality that 
$$\rE(\cB,\cC) \le \rE(\cB)^{1/2}\rE(\cC)^{1/2}.$$ 
Indeed,
\begin{align*}
\rE(\cB,\cC)&=\sum_{x \in \F_q} r_{\cB,-\cB}(x) r_{\cC,-\cC}(x)
\\&\leq \left (\sum_{x \in \F_q} r_{\cB,-\cB}^2(x) \right)^{1/2} \cdot \left (\sum_{x \in \F_q} r_{\cC,-\cC}^2(x) \right)^{1/2}
\\&=\rE(\cB)^{1/2}\rE(\cC)^{1/2},
\end{align*}
which completes the proof.
\end{proof} 

We also have an analogue of Lemma~\ref{lem:AddCharEnergy}
for multiplicative characters.

\begin{lemma}
\label{lem:MultCharEnergy} For any sets $\cA,\cB, \cC \subseteq  \F_q^*$   and character  $ \psi\ \in \Psi^* $ we
 have
$$
\TABC \ll   A^{1/2} \rE(\cB^{-1})^{1/4} \rE(\cC^{-1})^{1/4} q^{1/2}+A^{1/2}BC.
$$
\end{lemma}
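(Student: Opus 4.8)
The plan is to follow the proof of Lemma~\ref{lem:AddCharEnergy} as closely as possible (with $\chi\in\cX^*$, consistent with the definition of $\TABC$), the one new feature being that completing a \emph{multiplicative} character sum in the variable $a$ produces, besides the expected diagonal term, a family of $O(1)$ contributions whose total is responsible for the extra summand $A^{1/2}BC$. Writing $\ABC=(b+c)a+bc$, applying the Cauchy--Schwarz inequality in $a\in\cA$ and then completing the sum over $a$ to all of $\F_q$ (only non-negative terms are added), one obtains
$$
|\TABC|^2\le A\sum_{a\in\F_q}\left|\sum_{b\in\cB}\sum_{c\in\cC}\chi\bigl((b+c)a+bc\bigr)\right|^2=A\sum_{b_1,b_2\in\cB}\ \sum_{c_1,c_2\in\cC}I(b_1,b_2,c_1,c_2),
$$
where, with $\sigma_i=b_i+c_i$ and $\pi_i=b_ic_i$ (note $\pi_i\neq0$ since $\cB,\cC\subseteq\F_q^*$),
$$
I(b_1,b_2,c_1,c_2)=\sum_{a\in\F_q}\chi(\sigma_1a+\pi_1)\,\overline{\chi(\sigma_2a+\pi_2)}.
$$

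The heart of the argument is the evaluation of $I$. The two affine forms $\sigma_1a+\pi_1$ and $\sigma_2a+\pi_2$ are proportional exactly when $\sigma_2\pi_1=\sigma_1\pi_2$, and dividing this identity by $\pi_1\pi_2$ shows that this holds if and only if $b_1^{-1}+c_1^{-1}=b_2^{-1}+c_2^{-1}$; in that case $|I|$ equals $q$ or $q-1$. If the two forms are not proportional, then either exactly one of $\sigma_1,\sigma_2$ vanishes, so that the corresponding form is a nonzero constant and $\sum_{a\in\F_q}\overline{\chi}(\cdot)=0$ by nontriviality of $\chi$, or both $\sigma_1,\sigma_2$ are nonzero and $a\mapsto(\sigma_1a+\pi_1)/(\sigma_2a+\pi_2)$ is a nonconstant M\"obius transformation, whence, with poles contributing $0$ by the convention $\chi(0)=0$ and letting the argument run over the image,
$$
I=\sum_{\substack{w\in\F_q^*\\ w\neq\sigma_1/\sigma_2}}\chi(w)=-\chi(\sigma_1/\sigma_2),
$$
which has modulus $1$. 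Hence in every case $|I|\le q\cdot\mathbf 1[\,b_1^{-1}+c_1^{-1}=b_2^{-1}+c_2^{-1}\,]+1$. Substituting this bound, together with the trivial count $B^2C^2$ of quadruples, gives
$$
|\TABC|^2\le Aq\,N+A(BC)^2,\qquad N=\#\bigl\{(b_1,b_2,c_1,c_2)\in\cB^2\times\cC^2~:~b_1^{-1}+c_1^{-1}=b_2^{-1}+c_2^{-1}\bigr\}.
$$
Replacing each $b_i$ by $b_i^{-1}$ and each $c_i$ by $c_i^{-1}$ (bijections of $\F_q^*$) identifies $N$ with the cross additive energy $\rE(\cB^{-1},\cC^{-1})$ from the proof of Lemma~\ref{lem:AddCharEnergy}, and the same Cauchy--Schwarz step used there gives $\rE(\cB^{-1},\cC^{-1})\le\rE(\cB^{-1})^{1/2}\rE(\cC^{-1})^{1/2}$. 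Taking square roots and using $\sqrt{x+y}\le\sqrt{x}+\sqrt{y}$ yields the asserted bound.

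The step I expect to be the main obstacle is the sharp evaluation of $I$: one needs the bound $|I|\le1$ off the diagonal rather than the $O(q^{1/2})$ one would get by quoting the Weil bound for $\sum_a\chi(\phi(a))$, since the weaker estimate would only produce an error term of the shape $A^{1/2}BCq^{1/4}$ in place of $A^{1/2}BC$. Making this precise requires the explicit reduction of the M\"obius character sum to $\sum_{w\in\F_q^*}\chi(w)-\chi(\sigma_1/\sigma_2)$, along with the bookkeeping that matches the proportionality condition $\sigma_2\pi_1=\sigma_1\pi_2$ with the inversive additive equation $b_1^{-1}+c_1^{-1}=b_2^{-1}+c_2^{-1}$; the degenerate cases $b_i+c_i=0$ must be inspected separately, but they are routine.
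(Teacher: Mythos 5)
Your proof is correct and follows essentially the same route as the paper: Cauchy--Schwarz in $a$, completion of the sum, reduction of the inner sum over $a$ to an $O(1)$ bound off the diagonal $b_1^{-1}+c_1^{-1}=b_2^{-1}+c_2^{-1}$ and $O(q)$ on it, and then $\rE(\cB^{-1},\cC^{-1})\le\rE(\cB^{-1})^{1/2}\rE(\cC^{-1})^{1/2}$. The only (cosmetic) difference is that the paper first rewrites $\ABC=bc\bigl(a(b^{-1}+c^{-1})+1\bigr)$ and substitutes $a\to a^{-1}$ so as to quote the standard ``approximate orthogonality'' $\sum_{a}\chi(a+u)\overline{\chi}(a+v)\ll 1$ for $u\ne v$, whereas you evaluate the equivalent M\"obius character sum directly from the affine forms $(b+c)a+bc$.
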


\begin{proof} By the Cauchy-Schwarz inequality we have
\begin{align*}
|\TABC|^2 & \le  A \sum_{a\in \cA} \left|\sum_{b \in \cB} \sum_{c\in \cC}  \chi(\ABC)\right|^2\\
& \le  A \sum_{a\in \F_q^*} \left|\sum_{b \in \cB} \sum_{c\in \cC}  \chi(\ABC) \right|^2\\
&= A \sum_{b_1, b_2 \in \cB} \sum_{c_1, c_2\in \cC}  \chi\(\frac{b_1c_1}{b_2c_2}\) \\
& \qquad \qquad   \sum_{a\in \F_q^*}  \chi\(a\(b_1^{-1}+c_1^{-1}\)+1\) 
\overline\chi\(a\(b_2^{-1}+c_2^{-1}\)+1\)\\
&= A \sum_{b_1, b_2 \in \cB} \sum_{c_1, c_2\in \cC}  \chi\(\frac{b_1c_1}{b_2c_2}\) \\
& \qquad \qquad   \sum_{a\in \F_q^*}  \chi\(a^{-1} + b_1^{-1}+c_1^{-1}\)
\overline\chi\(a^{-1} + b_2^{-1}+c_2^{-1}\), 
\end{align*}
where $\overline\chi$ is the complex conjugate character. 
Making the change of variable $a \to a^{-1}$ in the sum over $a \in \F_q^*$
we arrive to 
\begin{align*}
|\TABC|^2 
&\le A \sum_{b_1, b_2 \in \cB} \sum_{c_1, c_2\in \cC}  \chi\(\frac{b_1c_1}{b_2c_2}\) \\
& \qquad \qquad   \sum_{a\in \F_q^*}  \chi\(a + b_1^{-1}+c_1^{-1}\)
\overline\chi\(a + b_2^{-1}+c_2^{-1}\).
\end{align*}
By the ``approximate'' orthogonality of multiplicative  characters, that is, by 
$$
 \sum_{a\in \F_q^*}  \chi\(a +u\) 
\overline\chi\(a+v\)
\ll 
\left\{
\begin{array}{ll}
1 & \text{if } u \ne v,  \\
  q & \text{if } u = v ,
\end{array}
\right.
$$
we obtain 
$$
\TABC \ll  \sqrt{A\rE(\cB^{-1},\cC^{-1})q}  + A^{1/2} BC , 
$$
where 
$$
\rE(\cB^{-1},\cC^{-1}) = \#\{(b_1, b_2,c_1, c_2)\in\cB  \times \cB \times \cC \times\cC ~:~
b_1^{-1}+c_1^{-1}=b_2^{-1}+c_2^{-1}\}.
$$
As in the proof of  Lemma~\ref{lem:AddCharEnergy} we obtain
$$
\rE(\cB^{-1},\cC^{-1}) \le   \sqrt{\rE(\cB^{-1}) \rE(\cC^{-1})}
$$
and the result follows. 
\end{proof}

\subsection{Concluding the proofs} 

We are now ready to establish the desired results. 

\begin{proof}[Proof of Theorem~\ref{thm:S or T}]
 By Theorem~\ref{thm:decomp} there exist $\cS$ and $\cT$ with additive energies
$$\max\{\rE(\cS),\rE(\cT^{-1})\}\ll \frac{B^3}{\rM(B)}$$
and $\cB=\cS \cup \cT$. Hence, one of these sets contains at least $B/2$ elements. Let $\cW$ be this set. 
Lemmas~\ref{lem:AddCharEnergy} and~\ref{lem:MultCharEnergy} 
complete the proof. Note that the second summand in Lemma~\ref{lem:MultCharEnergy} is smaller than the bound of Theorem~\ref{thm:S or T}.
\end{proof}

\begin{proof}[Proof of Theorem~\ref{thm:AnotB}] 
Similarly to the proof of Theorem~\ref{thm:S or T}, there exist $\cS_1,\cS_2,\cT_1,\cT_2$ with 
$$\max\{\rE(\cS_1),\rE(\cT_1^{-1})\}\ll \frac{B^3}{\rM(B)},$$
$$\max\{\rE(\cS_2),\rE(\cT_2^{-1})\}\ll \frac{C^3}{\rM(C)},$$
$\cB=\cS_1\cup \cT_1$ and $\cC=\cS_2\cup \cT_2$. Let $\cW_i$, $i=1,2$, be the larger set of $\cS_i$ and $\cT_i$. Then the result follows again by Lemmas~\ref{lem:AddCharEnergy} and~\ref{lem:MultCharEnergy},
which completes the proof of Theorem~\ref{thm:AnotB}.
\end{proof}

%


\begin{proof}[Proof of Theorem~\ref{thm:Mixed}] 
The proof follows a combination of the calculations of Lemmas~\ref{lem:AddCharEnergy}
and~\ref{lem:MultCharEnergy}. 
By the Cauchy-Schwarz inequality we have
\begin{align*}
|\fABC|^2 &  \le  A \sum_{a\in \cA} \left|\sum_{b \in \cB} \sum_{c\in \cC}  \chi(\ABC) \psi(\ABC)\right|^2\\
& \le  A \sum_{a\in \F_q^*} \left|\sum_{b \in \cB} \sum_{c\in \cC}  \chi(\ABC) \psi(\ABC)\right|^2\\
&= A \sum_{b_1, b_2 \in \cB} \sum_{c_1, c_2\in \cC}  \chi\(\frac{b_1c_1}{b_2c_2}\)  \psi\(b_1c_1- b_2c_2\)\\
& \qquad   \quad \sum_{a\in \F_q^*}    \chi\(a^{-1}+ b_1^{-1}+c_1^{-1}\) 
\overline\chi\(a^{-1} + b_2^{-1}+c_2^{-1}\)\\
& \qquad  \qquad  \qquad \qquad  \qquad \qquad  \psi\(a\(b_1+c_1-b_2-c_2\)\).
\end{align*}
Since both $\chi$ and $\psi$ are nontrivial characters we see that the inner sum over $a$ 
satisfies the Weil bound, see~\cite[Appendix~5, Example~12]{Weil}, unless 
$$
b_1^{-1}+c_1^{-1}= b_2^{-1}+c_2^{-1} \mand b_1+c_1 = b_2+c_2.
$$
Clearly, when $(b_2, c_2) \in \cB \times \cC$ are fixed (in $BC$ ways) the pair $(b_1, c_1)$ 
can take at most two values. 
Hence, we obtain 
$$ 
|\fABC|^2  \ll  A  (B^2C^2 q^{1/2} + BC q)
$$
and the result follows. \end{proof}

\begin{proof}[Proof of Theorem~\ref{thm:Kloost}] 
We now partition the set $\cC$ into two sets $\cS$ and $\cT$ as in Theorem~\ref{thm:decomp}
with respect  to the function $f(X) = X^{-1}$. Thus
$$
\max\{\rE(\cS),\rE(\cT^{-1})) \}\ll \frac{C^3}{\rM(C)}.
$$
Since $(x+y)^2\le 2(x^2+y^2)$ for any real $x$ and $y$, we obtain 
\begin{equation}
\label{eq:K}
K(\cA,\cB,\cC; \balpha, \bbeta, \bgamma)   \le 2 \(\sum_{a \in \cA}   |\alpha_a| U_a+   \sum_{b \in \cB}  |\beta_b| V_b\), 
\end{equation}
where 
$$
U_a  = \sum_{b \in \cB}  |\beta_b|
\left| \sum_{c \in \cT} \gamma_c  \psi\(ac + bc^{-1}\)\right|^2,~
V_b = \sum_{a\in \cA}   |\alpha_a|
\left| \sum_{c \in \cS} \gamma_c  \psi\(ac + bc^{-1}\)\right|^2.
$$

By the Cauchy-Schwarz inequality, as before, for every $a \in \cA$ we derive
\begin{align*}
U_a^2 & \ll  \|\bbeta\|_2^2
\sum_{b\in \cB}\left| \sum_{c \in \cT} \gamma_c  \psi\(ac + bc^{-1}\)\right|^4\\
& \le   \|\bbeta\|_2^2
\sum_{b\in \F_q}\left| \sum_{c \in \cT} \gamma_c  \psi\(ac + bc^{-1}\)\right|^4\\
 & \le   \|\bbeta\|_2^2  \|\bgamma\|_\infty^4 q \rE(\cT^{-1}) \ll  \|\bbeta\|_2^2  \|\bgamma\|_\infty^4 q C^3\rM(C)^{-1}.
\end{align*}
Similarly, for every $b \in \cB$, we have
$$
V_b^2 \ll  \|\balpha\|_2^2  \|\bgamma\|_\infty^4 q \rE(\cS) 
\ll  \|\balpha\|_2^2  \|\bgamma\|_\infty^4 q   C^3\rM(C)^{-1}.
$$
Substituting these bounds in~\eqref{eq:K} we conclude the proof. 
\end{proof}

\section{Comments}

It is easy to verify that at the cost of merely typographical changes one can 
obtain a full analogue of Theorem~\ref{thm:decomp} for 
$$
\rE(f(\cT), g(\cT)) = \# \{t_1, t_2, t_3, t_4 \in \cT^4~:~f(t_1) + g(t_2) = f(t_3) + g(t_4)\}.
$$
Most likely, using multiplicative character sums instead of additive character sums, 
one can obtain similar results for $\max\{\rE^\times(\cS),\rE(f(\cT)) \}$ or even
 $\max\{\rE^\times(\cS), \rE(f(\cT), g(\cT)) \}$. However the following question 
 appears to require new ideas. 
 
 \begin{question}
 Given two bivariate polynomials $F(X,Y), G(X,Y) \in \F_q[X,Y]$, satisfying some natural 
 conditions, show that any set $\cA\subseteq \F_q$ can be partitioned as $\cA=\cS \sqcup \cT$
 in such a way that 
 the two sets
 $$
 \{\(s_1, s_2, s_3, s_4\) \in \cS^4~:~F(s_1,s_2) = F(s_3,s_4)\}
 $$
 and
 $$
 \{\(t_1, t_2, t_3, t_4\) \in \cT^4~:~G(t_1,t_2) = G(t_3,t_4)\}
$$
are both of small cardinality. 
\end{question}

 Finally, we note that Theorem~\ref{thm:Kloost} can be extended to  the sums
\begin{equation}
\label{eq:ugly}
\sum_{a\in \cA} \sum_{b \in \cB} \sum_{c\in \cC}  \sum_{d\in \cD}
\psi\(a(c+d) + b(f(c) + g(d))\)
\end{equation}
over sets $\cA, \cB, \cC, \cD \subseteq \F_q$ and rational functions $f,g \in \F_q(X)$. 
Within our approach,  one can show that, under some natural conditions on $f$ and $g$,  
for any $\varepsilon > 0$ there are some $\delta>0$ and $\kappa > 0$, such 
that as long as $A(CD)^{1/2} , B(CD)^{1/2} \ge q^{1-\delta}$ and $C,D \ge q^{1/2 + \varepsilon}$ the  
sums~\eqref{eq:ugly} are of order at most $ABCD q^{-\kappa}$.  Despite a somewhat exotic shape 
of the sums~\eqref{eq:ugly}, they  
may be used in the theory of randomness extractors in arbitrary  finite fields where the theory falls 
far below its counterpart in prime fields, see~\cite{B-SG,B-SK,BDZ} for more details
and further references.   One can also introduce
weights of the form $\alpha_a$, $\beta_b$ and $\gamma_{c,d}$ in the sums~\eqref{eq:ugly}.

%
%
%

 \section*{Acknowledgements}
 
The authors thank Brendan Murphy, Misha Rudnev and Ilya Shkredov for helpful conversations. 

The first and third author were supported by the Austrian Science Fund FWF Projects F5509 and F5511-N26, respectively, which are  part of the 
Special Research Program ``Quasi-Monte Carlo Methods: Theory and Applications''.  
The second author
was supported  by the Australian Research Council Grant DP140100118.

\end{document}